\numberwithin{equation}{section}
\newtheorem{letterthm}{Theorem}
\newtheorem{lettercor}[letterthm]{Corollary}
\newtheorem{letterdefinition}[letterthm]{Definition}
\newtheorem{theorem}{Theorem}[section]
\newtheorem{lemma}[theorem]{Lemma}
\newtheorem{corollary}[theorem]{Corollary}
\newtheorem{proposition}[theorem]{Proposition}
\newtheorem{question}[theorem]{Question}
\newtheorem{definition}[theorem]{Definition}
\newtheorem{remark}[theorem]{Remark}
\newtheorem{example}[theorem]{Example}
\newcommand{\act}{\curvearrowright}
\newcommand{\cC}{\mathcal C}
\newcommand{\C}{\mathbf C}
\newcommand{\fF}{\mathfrak F}
\DeclareMathOperator{\Fix}{Fix}
\newcommand{\fH}{\mathfrak H}
\newcommand{\scrH}{\mathscr H}
\DeclareMathOperator{\Homeo}{Homeo}
\newcommand{\cI}{\mathcal I}
\DeclareMathOperator{\id}{id}
\DeclareMathOperator{\Ind}{Ind}
\newcommand{\cJ}{\mathcal J}
\newcommand{\fK}{\mathfrak K}
\newcommand{\scrK}{\mathscr K}
\newcommand{\la}{\lambda}
\DeclareMathOperator{\Leaf}{Leaf}
\newcommand{\N}{\mathbf{N}}
\newcommand{\NInd}{\text{Ind-mixing}}
\newcommand{\ot}{\otimes}
\newcommand{\cP}{\mathcal P}
\newcommand{\R}{\mathbf{R}}
\DeclareMathOperator{\Ran}{Ran}
\DeclareMathOperator{\Root}{Root}
\newcommand{\bS}{\mathbf S^1}
\DeclareMathOperator{\Stab}{Stab}
\DeclareMathOperator{\supp}{supp}
\newcommand{\fT}{\mathfrak T}
\DeclareMathOperator{\tar}{tar}
\newcommand{\ti}{\tilde}
\newcommand{\cU}{\mathcal U}
\newcommand{\fU}{\mathfrak U}
\DeclareMathOperator{\Ver}{Ver}
\newcommand{\fX}{\mathfrak X}
\newcommand{\Y}{\wedge}
\newcommand{\Z}{\mathbf{Z}}
\DeclarePairedDelimiterX{\norm}[1]{\lVert}{\rVert}{#1}
\DeclareMathAlphabet\urwscr{U}{urwchancal}{m}{n}%
\begin{document}

	\title[Jones' representations]{Jones' representations of R.~Thompson's groups not induced by finite-dimensional ones}
	\thanks{
		AB is supported by the Australian Research Council Grant DP200100067.}
	\author{Arnaud Brothier and Dilshan Wijesena}
	\address{Arnaud Brothier, Dilshan Wijesena\\ School of Mathematics and Statistics, University of New South Wales, Sydney NSW 2052, Australia}
	\email{arnaud.brothier@gmail.com\endgraf
		\url{https://sites.google.com/site/arnaudbrothier/}}
		\email{dilshan.wijesena@hotmail.com}
	\maketitle
	
	\begin{abstract}
		Given any linear isometry from a Hilbert space to its square one can explicitly construct a so-called Pythagorean unitary representation of Richard Thompson's group $F$.
		We introduce a condition on the isometry implying that the associated representation does not contain any induced representations by finite-dimensional ones. 
		This provides the first result of this kind.
		We illustrate this theorem via a family of representations parametrised by the real 3-sphere for which all of them have this property except two sub-circles.
	\end{abstract}
	
	%\keywords{{\bf Keywords:} Thompson's groups, unitary representations, Jones' representations, mixing, fraction groups, Pythagorean C*-algebras, }

\section*{Introduction}
Vaughan Jones introduced a powerful tool for constructing actions of groups known as {\it Jones' technology} \cite{Jones17, Jo18}.
In particular, any linear isometry $\fH\to\fH\oplus\fH$ with $\fH$ a Hilbert space provides a so-called {\it Pythagorean} (unitary) representation $(\sigma,\scrH)$ of Richard Thompson's groups $F,T,$ and $V$ \cite{BJ19}.
The strength of this construction resides in obtaining unitary representations of the complicated groups $F,T,$ and $V$ using elementary initial data (for instance a linear isometry $\C^n\to\C^{2n}$). Moreover, these representations are very explicit carrying obvious algorithms for computing matrix coefficients. Finally, we can surprisingly derive representations of the Cuntz algebra making this technology also appealing from an operator algebraic point of view.

This article is a first of a series that will develop techniques and tools to study these Thompson's groups representations in view of decomposing them into irreducible components, deciding which one are pairwise isomorphic, and how much they differ from obvious ones (like monomial representations associated to subgroups of $F$).
In this article we introduce a simple criteria named {\it diffuse} on the linear isometry $\fH\to\fH\oplus\fH$ assuring that the associated unitary representation $(\sigma,\scrH)$ does not contain any representation induced by finite-dimensional ones (we refer to this latter property as \textit{\NInd}). 

{\bf Background.}
{\it Richard Thompson's groups $F,T,$ and $V$.}
Richard Thompson's groups introduced three groups $F\subset T\subset V$ in unpublished notes during the 1960's, see  \cite{CFP}. 
They are countable discrete groups acting by homeomorphisms on the unit interval, the unit circle, and the Cantor set, respectively. 
They appear in various area of mathematics such as topology, logic, dynamics and more recently in the reconstruction program of conformal field theories of Jones, see \cite{Jones17, BSurvey} for details on Jones' connection.
They are famous for satisfying rare properties of groups and following unexpected behaviours. 
One of the most celebrated open question regarding them is to know if $F$ is amenable or not. Note that much weaker properties than amenability are not known to hold including exactness, sophicity, and Cowling-Haagerup's weak amenability. 
Deeply understanding groups is usually done by studying actions of them. 
Jones introduced a technology to construct such.

{\it Jones' technology.}
In the 2010's Jones came across the Thompson groups while he was aiming to construct conformal field theories from subfactors \cite{Jones17, BSurvey}.
This connection came from the similarity of Kenneth Brown's diagrammatic description of elements of $F,T,V$ via rooted finite binary trees and the string diagrams of Jones (forming a planar algebra) used to describe the standard invariant of a subfactor \cite{brown1987,Jones-PA}.
From that discovery he defined a novel technique for constructing actions of the Thompson groups (and many other such groups) \cite{Jones17,Jo18}. 
This has already lead to numerous applications in mathematical-physics, group theory, knot theory, noncommutative probability and so on \cite{Jones18-Hamiltonian, Brothier-Stottmeister19, brothier2019haagerupT, Brothier19WP,Jones19-survey, Kostler-Krishnan-Wills20}. 

{\it Pythagorean's representations.}
Among other, this novel technology of Jones permits to construct a unitary representation of $F,T,$ and $V$ from any linear isometry $\fH\to \fH\oplus \fH$ where $\fH$ is a Hilbert space \cite{BJ19}.
These representations are named {\it Pythagorean} for the following reason: if we write the linear isometry as $\xi\mapsto (A\xi,B\xi)$ for some bounded linear operators $A,B\in B(\fH)$, then this pair of operators must satisfy the relation reminiscent of the one of Pythagoras:
\begin{equation}\label{eq:PR}A^*A + B^*B=\id_{\fH}.\end{equation}
The associated unitary representation $(\sigma,\scrH)$ of $F$ is constructed via an inductive limit of direct sum powers of $\fH$. It heavily depends on the choice of the so-called {\it Pythagorean pair} $(A,B)$. 
The limit Hilbert space $\scrH$ can be thought as the set of classes of (finite rooted binary) trees with their (ordered) leaves indexed by elements of $\fH$. Moving inside an equivalence class corresponds in growing or reducing trees and applying the operators $A,B$ to the decorations of the leaves, see Subsection \ref{sec:def-pyth} for details. 

{\it Connection with C*-algebras.} 
This extends a previous construction of Nekrashevych where moreover $A^*,B^*$ were required to be themselves isometries (forcing $\fH$ to be infinite-dimensional) and using the Cuntz algebra $\mathcal O$ \cite{nekrashevych2004cuntz,cuntz1977simple}.
One of the main power of Pythagorean representations is to be able to construct interesting representation for $F,T,V$ but starting from {\it finite-dimensional} operators $A,B$.
%C*-algebra
On the C*-algebraic side one can see that a Pythagorean representation is nothing else than a representation of the universal C*-algebra $P$ defined by two operators satisfying \eqref{eq:PR}. This is one of the few known non-nuclear C*-algebra having the lifting property as proved by Courtney \cite{courtney2021universal}. 
There is an obvious surjective map from $P$ to $\mathcal O$ and thus any representation of $\mathcal O$ defines one of $P$.
Surprisingly, any representation of $P$ produces a representation of $\mathcal O$ giving a new way to construct such using again only finite-dimensional initial data, see \cite[Proposition 7.1]{BJ19}. 

{\bf Content and main results.}
In this article we consider a Pythagorean representation $(\sigma,\scrH)$ of Thompson group's $F$ constructed from a Pythagorean pair $(A,B)$ acting on some $\fH$. 
We focus on actions of $F$ even if the representation $\sigma$ canonically extends to $T$ and $V$, making our analysis still relevant for these larger groups. 
Our approach consists in obtaining properties of representation of $F$ (thus acting on a {\it large} inductive limit Hilbert space $\scrH$) from the study of the two operators $A,B$ acting on a comparably {\it smaller} Hilbert space $\fH$.
In this article we compare Pythagorean representations with monomial ones (e.g.~quasi-regular) or more generally representations induced by finite-dimensional ones (and thus easily constructible without using Jones' technology). 
This leads to the following natural notion for unitary representations of discrete groups.
\begin{letterdefinition}\label{letter-def:NInd}
	Let $\sigma:G\act \scrH$ be a unitary representation of a discrete group. 
	We say that $\sigma$ is {\it \NInd} if given any non-trivial subgroup $K\subset G$ and any finite-dimensional non-zero unitary representation $\theta:K\act\fK$ we have that the induced representation $\Ind_K^G\theta$ is not contained in $\sigma$.
\end{letterdefinition}
Note that if $G$ is torsion-free, then mixing implies \NInd; and for all groups \NInd\ implies weak mixing, see Section \ref{sec:prop-rep} for definitions. However, none of the reverse implications hold in general.
We introduce the following key notion for Pythagorean pairs.
\begin{letterdefinition}\label{letter-def:diffuse}
	A Pythagorean pair $(A,B)$ acting on $\fH$ is called {\it diffuse} if given any increasing sequence of words $p_n$ in $A,B$ and any vector $\xi\in\fH$ we have that 
	$$\lim_{n\to\infty} p_n \xi =0.$$
	In that case, we say that the associate Pythagorean representation $(\sigma,\scrH)$ is diffuse.
\end{letterdefinition}

Here is our main result linking the notion of diffuse Pythagorean pairs with \NInd\ representations.
\begin{letterthm}\label{letter-theo}
	A diffuse Pythagorean representation is \NInd. 
\end{letterthm}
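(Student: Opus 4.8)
The plan is to argue by contradiction. Suppose $\Ind_K^F\theta$ is contained in $\sigma$ for some nontrivial $K\subseteq F$ and some nonzero finite-dimensional unitary $\theta\colon K\act\fK$, and fix an isometric $F$-equivariant embedding $\iota\colon\Ind_K^F\theta\to\scrH$. The image of the ``base fibre'' $\{e\}\ot\fK$ is a nonzero finite-dimensional subspace $W\subseteq\scrH$ that is $\sigma(K)$-invariant and, crucially, satisfies $\langle\sigma(g)w,w'\rangle=0$ for all $w,w'\in W$ and all $g\in F\setminus K$ (the defining orthogonality of the fibres of an induced representation). First I would recast diffuseness in a form adapted to $\scrH$: for a finite binary word $u$ let $\scrH_u\subseteq\scrH$ be the cylinder subspace of vectors supported on the dyadic interval $I_u$, with orthogonal projection $P_u$. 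A short computation with the inductive-limit structure shows that for a vector $\xi$ placed at a leaf and any infinite string $\beta\in\is$, the norm of its component in $\scrH_{\beta|_n}$ equals $\|p_n\xi\|$ for the increasing word $p_n$ running along $\beta$; hence diffuseness is exactly the statement that $P_{\beta|_n}\to 0$ strongly for every $\beta$. A K\"onig's-lemma argument, using that $\{u:\|P_u w\|\ge\delta\}$ is prefix-closed, then upgrades this to $\sup_{|u|=n}\|P_u w\|\to 0$ as $n\to\infty$, for every $w\in\scrH$.

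Next I would localize. If $g\in F$ is supported in a dyadic interval $I$, then $\sigma(g)$ fixes $\scrH_I^{\perp}$ pointwise, so for a unit vector $w\in W$ one gets $\langle\sigma(g)w,w\rangle=\langle\sigma(g)P_I w,P_I w\rangle+\|(1-P_I)w\|^2\ge 1-2\|P_I w\|^2$. By the uniform decay just established there is $n_1$ with $\|P_I w\|<2^{-1/2}$ whenever $|I|\le 2^{-n_1}$, whence $\langle\sigma(g)w,w\rangle>0$. Combined with the fibre orthogonality this forces every such $g$ into $K$; in particular $F_{I_k}\subseteq K$ for each of the $2^{n_1}$ dyadic intervals $I_k$ of length $2^{-n_1}$, so $W$ is invariant under each $F_{I_k}\cong F$.

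The heart of the argument is a lemma I would isolate: \emph{a diffuse Pythagorean representation admits no nonzero finite-dimensional invariant subspace}. To prove it, let $V\ne 0$ be finite-dimensional and $\sigma(F)$-invariant, and let $s\in F$ be a shift with $s(x)<x$ on $(0,1)$ (e.g.\ the standard generator $x_0$), so that $0$ is attracting and $1$ repelling with dyadic slopes. Since $\sigma(s)|_V$ is a unitary of a finite-dimensional space it has an eigenvector $v$, $\sigma(s)v=\lambda v$. Near $0$ the map $s$ sends the cylinder $I_{0^n}$ onto $I_{0^{n+r}}$, so $\sigma(s)$ carries $\scrH_{0^n}$ unitarily onto $\scrH_{0^{n+r}}$; this yields $P_{0^{n+r}}\sigma(s)=\sigma(s)P_{0^n}$ and hence $\|P_{0^n}v\|=\|P_{0^{n+r}}v\|$ for large $n$. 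As $\|P_{0^n}v\|\to 0$ by diffuseness, $v$ carries no mass near $0$; the symmetric argument at $1$ (using $s^{-1}$) shows it carries no mass near $1$, so $v$ is supported on a compact subinterval $J$ of $(0,1)$. But $s^m\to 0$ uniformly on $J$, so for large $m$ the vector $\sigma(s)^m v=\lambda^m v$ is supported in some $[0,2^{-n_0}]$ disjoint from $J$; as the corresponding cylinder subspace is orthogonal to $\scrH_J$ this forces $v=0$, a contradiction.

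Finally I would combine the pieces. Each reducing summand of $\sigma|_{F_{I_k}}$ on $\scrH_{I_k}$ is, by self-similarity of the construction, the Pythagorean representation of the \emph{same} pair $(A,B)$, hence diffuse; projecting the $F_{I_k}$-invariant space $W$ onto $\scrH_{I_k}$ gives a finite-dimensional invariant subspace for it, which the lemma forces to be zero. Since the $\scrH_{I_k}$ exhaust $\scrH$ we conclude $W=0$, contradicting $\theta\ne 0$. The main obstacle is the lemma: it requires translating the purely operator-theoretic diffuseness hypothesis on $(A,B)$ into genuine control of how elements of $F$ act on the inductive-limit space $\scrH$, and the delicate point is that single group elements \emph{do} have eigenvectors (any vector supported on a fixed interval is fixed), so one must use an element whose dynamics has no interior fixed point and exploit both its attracting and repelling behaviour to squeeze an eigenvector to zero.
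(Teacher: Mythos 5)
Your proposal is correct, and it is a genuinely different argument from the one in the paper. The paper never localises the unknown subgroup $H$: it applies the von Neumann ergodic theorem (Proposition \ref{MET rho}) to show that the fibre $\fK$ lies in the range of $\rho_{\supp(g)}$ for every $g$ all of whose powers avoid $H$, deduces that $Q=\bigcap_{g\in\mathscr P}\supp(g)$ is non-empty, and then builds ``vine'' elements $g_{n,i}$ acting trivially near a fixed $u\in Q$ whose powers stay in the same family, which forces infinitely many of them into $H$; the shift-operator estimate $\langle\sigma([L_i,R_i])\xi,\xi\rangle\to 0$ then makes $\sigma\restriction_H$ weakly mixing, contradicting $\theta\subset\sigma\restriction_H$. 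You avoid both the ergodic theorem and any search for elements of the subgroup: your K\"onig's-lemma upgrade of diffuseness to uniform cylinder decay, combined with fibre orthogonality, forces every element of sufficiently small support into $K$, so that $K$ contains the full subgroups $\Fix_F(I_k^c)\cong F$ over a standard dyadic partition $\{I_k\}$; self-similarity (identities (i)--(iii) of Section \ref{pythag rep section}) plus your weak-mixing lemma then annihilate each $\rho_{I_k}(W)$, whence $W=0$ since $\sum_k\rho_{I_k}=\id$. Your lemma is exactly the paper's Proposition \ref{pythag rep weakly mixing corollary}, proved differently: the paper uses the vines and weak convergence of shift powers, while you squeeze an eigenvector of $\sigma(x_0)$ using cylinder intertwining and the attracting/repelling dynamics of $x_0$; both proofs in fact need only $A^n,B^n\xrightarrow{s}0$ rather than full diffuseness. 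Two minor points: under the paper's conventions $\sigma(g)$ maps vectors supported on $X$ to vectors supported on $g^{-1}(X)$, so your intertwining relations should read $\rho_{0^{n}}\sigma(x_0)=\sigma(x_0)\rho_{0^{n+1}}$ (harmless, since $v$ is an eigenvector of both $\sigma(s)$ and $\sigma(s^{-1})$), and your localisation inequality is really a lower bound on the real part of $\langle\sigma(g)w,w\rangle$ (the term $\langle\sigma(g)P_Iw,P_Iw\rangle$ is complex), which still gives $g\in K$. As for what each approach buys: yours never uses non-triviality of $K$, so it proves slightly more than Theorem \ref{letter-theo} --- a diffuse Pythagorean representation contains no multiple of the left regular representation either, a case excluded by Definition \ref{letter-def:NInd} --- whereas the paper's route yields genuine information about the given subgroup itself, namely weak mixing of $\sigma\restriction_H$, which your localisation sidesteps entirely.
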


Note that all Pythagorean representations are not mixing, see Remark \ref{obs:NInd-mixing}.
We obtain the first known family of representations of $F$ of this kind (being \NInd\ but not mixing).
A related result was proved by Garncarek when he considered a one-parameter deformation of the Koopman representation of $F\act [0,1]$ \cite{garncarek2012analogs}. He proved, among other, that these representations do not contain induced representations of the form $\Ind_{F_p}^F \theta$ where $F_p$ is a parabolic subgroup of $F$ and $\theta:F_p\act \fK$ is finite-dimensional. 

We illustrate our theorem by considering all Pythagorean representations obtained from all linear isometries $\C\to\C^2$.
They are parametrised by the unit vectors $(a,b)$ of $\C^2$ which is the real 3-sphere $S^3$ (hence our operators $A,B$ are complex scalar multiplications by $a,b$ acting on $\C$).
The diffuse representations are exactly those where both $a$ and $b$ are non-zero, that is, $S^3$ minus the union of the two circles
$$C_1:=\{(a,0)\in\C^2:\ |a|=1\} \text{ and } C_2:=\{(0,b)\in\C^2:\ |b|=1\}.$$
We directly deduce the following.
\begin{lettercor}\label{letter-cor:dim-one}
	For all $(a,b)\in S^3\setminus(C_1\cup C_2)$ we have that the associated representation $\sigma_{a,b}:F\act\scrH$ is \NInd.
\end{lettercor}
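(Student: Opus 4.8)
The plan is to deduce Corollary \ref{letter-cor:dim-one} directly from Theorem \ref{letter-theo}, the only work being to check that the Pythagorean pairs lying off the two circles are diffuse in the sense of Definition \ref{letter-def:diffuse}. First I would record the parametrisation. A linear isometry $\C\to\C\oplus\C$ is exactly a map $\xi\mapsto(a\xi,b\xi)$, so in this case $\fH=\C$ and $A,B$ are the scalar operators given by multiplication by $a$ and $b$. The Pythagorean relation \eqref{eq:PR}, namely $A^*A+B^*B=\id_{\C}$, reduces to the single identity $|a|^2+|b|^2=1$, which identifies the parameter space with the unit sphere $S^3\subset\C^2$.

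Next I would reduce diffuseness to an elementary estimate. A word $p_n$ in $A,B$ of length $\ell_n$ acts on $\C$ as multiplication by a scalar that is a product of $\ell_n$ factors, each equal to $a$ or $b$; hence $|p_n|=|a|^{k_n}|b|^{\ell_n-k_n}$, where $k_n$ denotes the number of occurrences of $A$. The hypothesis $(a,b)\in S^3\setminus(C_1\cup C_2)$ means precisely that $a\neq 0$ and $b\neq 0$, and then $|a|^2+|b|^2=1$ forces both $|a|$ and $|b|$ to lie strictly in $(0,1)$. Writing $r:=\max(|a|,|b|)<1$, every factor has modulus at most $r$, so $|p_n|\le r^{\ell_n}$. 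For an increasing sequence of words the length $\ell_n$ tends to infinity, whence $|p_n|\le r^{\ell_n}\to 0$ and therefore $p_n\xi\to 0$ for every $\xi\in\C$. Thus $(A,B)$ is diffuse, and Theorem \ref{letter-theo} yields that $\sigma_{a,b}$ is \NInd.

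There is essentially no deep obstacle once Theorem \ref{letter-theo} is available; the only points requiring care are bookkeeping ones. I would make sure that the hypothesis ``increasing sequence of words'' is used solely through the fact that it forces $\ell_n\to\infty$, and that the strict inequality $r<1$ holds exactly off $C_1\cup C_2$. To justify the surrounding claim that the diffuse representations are \emph{exactly} those with $a$ and $b$ both non-zero, I would also note the converse, which is immediate though not needed for the corollary itself: if $a=0$, so $(a,b)\in C_2$ and $|b|=1$, then the words $B^n$ act as $b^n$ with $|b^n|=1$, so $p_n\xi\not\to 0$ for $\xi\neq 0$, and symmetrically on $C_1$. Hence the diffuse locus is precisely $S^3\setminus(C_1\cup C_2)$, on which \NInd\ follows.
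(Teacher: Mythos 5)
Your proposal is correct and takes essentially the same route as the paper: the paper's proof of this corollary likewise consists of observing that a Pythagorean pair on $\fH=\C$ is diffuse exactly when $a\neq 0\neq b$, i.e.~when $(a,b)\in S^3\setminus(C_1\cup C_2)$, and then applying Theorem \ref{letter-theo}. Your explicit estimate $|p_n|\le \max(|a|,|b|)^{n}\to 0$ and the converse on the circles merely spell out the elementary verification that the paper leaves implicit.
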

We observe that the class of representations considered by Garncarek corresponds exactly to the sub-circle $C_3\subset S^3$ of all $(\omega/\sqrt 2,\omega/\sqrt 2)$ with $\omega$ a complex number of modulus one. 
We will show in a future work that the diffuse representations of this corollary are all irreducible and pairwise non-isomorphic.
This extends and generalises Garncarek's pioneering results using novel techniques that deeply rely on the Pythagorean structure of the representations.
We will later provide a much wider and richer class of concrete irreducible \NInd\ examples by considering $A,B$ acting on higher dimensional Hilbert spaces.

By adapting the proof of Theorem \ref{letter-theo} we obtain the following criteria on $(A,B)$ assuring that $\sigma:F\act\scrH$ is weakly mixing.

\begin{lettercor}\label{letter-cor}
	Consider a Pythagorean pair $(A,B)$ acting on $\fH$ with associated representation $(\sigma,\scrH)$.
	If $\lim_{n}A^n\xi=\lim_n B^n\xi=0$ for all $\xi\in\fH$, then $\sigma$ is weakly mixing (i.e.~does not contain any finite-dimensional subrepresentation).
\end{lettercor}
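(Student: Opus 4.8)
The plan is to deduce weak mixing from a single escaping sequence of group elements. By definition $\sigma$ is weakly mixing exactly when it has no nonzero finite-dimensional subrepresentation, so it suffices to exhibit one sequence $g_m\in F$ with $\sigma(g_m)\to 0$ in the weak operator topology. Indeed, if such a sequence exists and $V\subset\scrH$ were a nonzero finite-dimensional $\sigma(F)$-invariant subspace with orthonormal basis $e_1,\dots,e_d$, then $\sigma(g_m)e_i\in V$ would force $\|\sigma(g_m)e_i\|^2=\sum_{j=1}^d|\langle\sigma(g_m)e_i,e_j\rangle|^2\to 0$, contradicting $\|\sigma(g_m)e_i\|=1$. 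Since the $\sigma(g_m)$ are unitaries, hence uniformly bounded, it is enough to check $\langle\sigma(g_m)\xi,\eta\rangle\to 0$ on the dense subspace spanned by leaf-supported vectors, that is, vectors of the form ``$\zeta\in\fH$ decorating a single leaf of a standard tree.''

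First I would take $g_m:=x_0^m$, where $x_0$ is the standard generator of $F$, viewed as the homeomorphism of $[0,1]$ whose only fixed points are $0$ and $1$ and which satisfies $x_0^m(t)\to 0$ for every $t\in[0,1)$. Fix leaf-supported vectors $\xi=\zeta$ at a leaf-interval $I$ and $\eta=\zeta'$ at a leaf-interval $J$. Using the description of $\sigma$ recalled in Subsection~\ref{sec:def-pyth}, the coefficient $\langle\sigma(x_0^m)\xi,\eta\rangle$ is computed by matching the dyadic pieces of $x_0^m(I)$ against $J$: it equals a finite sum of terms of the shape $\langle \Pi_u\zeta,\Pi_{u'}\zeta'\rangle$, where $u,u'$ are the dyadic address-offsets produced by the affine pieces of $x_0^m$ and $\Pi_w$ denotes the corresponding word in $A,B$ read off the address $w$. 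Here the Pythagorean relation supplies $\|A\|,\|B\|\le 1$ together with the energy identity $\sum_{|w|=k}\|\Pi_w\zeta\|^2=\|\zeta\|^2$.

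The key point is that as $m\to\infty$ this matching concentrates near the two fixed points $0$ and $1$. The pieces of $x_0^m(I)$ that plunge toward $0$ force the offset to begin with $0^k$ with $k\to\infty$, so each such term carries a factor $A^k$ and tends to $0$ by the hypothesis $A^n\zeta\to 0$; symmetrically, the mass that can survive in $J$ only near the endpoint $1$ sits along addresses $1^k$ with $k\to\infty$ and is killed by $B^n\zeta'\to 0$. This is exactly the specialization of the coefficient estimates behind Theorem~\ref{letter-theo} to the two extreme infinite paths $0^\infty$ and $1^\infty$, which explains why only decay along $A^n$ and $B^n$ is required rather than full diffuseness. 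Combining the two endpoint analyses yields $\langle\sigma(x_0^m)\xi,\eta\rangle\to 0$ for all leaf-supported $\xi,\eta$, hence $\sigma(x_0^m)\to 0$ in the weak operator topology, and weak mixing follows from the reduction above.

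The main obstacle is making the concentration quantitative. The number of affine (equivalently dyadic) pieces of $x_0^m$ grows linearly in $m$, so the finite sum defining the coefficient has a growing number of terms and the estimate is not termwise trivial: given $\varepsilon>0$ one must show that all but finitely many pieces contribute at most $\varepsilon$ in total. I expect to obtain this from the energy identity and Cauchy--Schwarz, which bound the aggregate mass of the pieces clustering near $0$ (resp.\ near $1$) by $\|\zeta\|$ (resp.\ $\|\zeta'\|$), so that only boundedly many ``deep'' terms remain and each of those is individually small by $A^n\zeta\to 0$ and $B^n\zeta'\to 0$. Carrying out this uniform tail bound is the technical heart of the argument.
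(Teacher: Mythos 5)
Your proposal is correct in substance, but it takes a genuinely different route from the paper's proof of this statement (Proposition \ref{pythag rep weakly mixing corollary}). The paper verifies the finite-set criterion of Proposition \ref{prop:unitary-rep}: given test vectors written over a common tree $t_n$, it uses elements $\ti g_{n,i}$ obtained by attaching the vines $L_i$ (resp.\ $R_i$) to \emph{all} $2^n$ leaves of $t_n$; these elements stabilise every leaf, so the matrix coefficient splits leafwise into copies of the root-level computation of Claim 6, and no interaction between the group element and the tree structure of the vectors needs to be controlled. You instead fix one universal sequence $g_m=x_0^m$ and prove $\sigma(x_0^m)\to 0$ in the weak operator topology. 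Note that by Claim 3 of the paper $x_0^m$ is (up to inversion) exactly the vine element $[L_m,R_m]$, so your endpoint analysis is the paper's Claim 6; the new difficulty you take on is that $x_0^m$ does not split leafwise against a vector decorated over an arbitrary tree $t$. Your reduction is valid (a WOT-null sequence of unitaries precludes nonzero finite-dimensional invariant subspaces, and uniform boundedness allows checking WOT-convergence on the dense span of leaf-supported vectors), and your conclusion is in fact stronger than the paper's: it shows that the restriction of $\sigma$ to the cyclic subgroup $\langle x_0\rangle\cong\Z$ is already weakly mixing.

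The technical heart you flag does close, with exactly the tools you name, although the picture ``only boundedly many deep terms remain'' is not quite right. For $z,w$ decorated over a tree $t$ with leftmost leaf $0^{d_0}$ (component $\zeta$) and rightmost leaf $1^{d_1}$ (component $\eta$), the coefficient $\langle\sigma(x_0^m)z,w\rangle$ consists of finitely many terms that are termwise small by $A^n\to0$ or $B^n\to0$, plus a middle sum of the form $\sum_{a+b=N}\langle BA^{a}\zeta, AB^{b}\eta\rangle$ with $N\to\infty$, whose number of terms grows with $m$ and where no termwise bound suffices. Instead, telescoping the Pythagorean relation along the extreme rays gives $\sum_{a\geq 0}\|BA^{a}\zeta\|^2\leq\|\zeta\|^2$ and $\sum_{b\geq 0}\|AB^{b}\eta\|^2\leq\|\eta\|^2$, and splitting the sum at $a\geq N/2$ versus $b\geq N/2$ and applying Cauchy--Schwarz to each half bounds it by tails of $\ell^2$-sequences, which vanish. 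This is precisely the mechanism the paper packages as the isometries $L_\infty,R_\infty:\fH\to\fH\ot\ell^2(\Z)$ together with weak convergence of shift powers to zero, so the analytic ingredient is shared; what your route buys is a single sequence of group elements independent of the test vectors, at the price of this convolution estimate.
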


In a future work (which relies on general decompositions of Pythagorean representations) we will show that both assumptions of Theorem \ref{letter-theo} and Corollary \ref{letter-cor} are in fact necessary \cite{Brothier-Wijesena-2}. Hence, we provided characterisations of \NInd\ and weak mixing Pythagorean representations solely in terms of the pair $(A,B)$. 

{\bf Plan of the article.}
In Section \ref{sec:preliminaries} we provide a detailed preliminary section on unitary representations, strong operator topology, Thompson's groups, and Pythagorean representations. 
This allows us to introduce terminology, notations, and classical results needed for our study.

In Section \ref{pythag rep section} we introduce a class of partial isometries $(\tau_\nu)_\nu$ acting on $\scrH$ that is the carrier Hilbert space of a Pythagorean representation $\sigma$. 
These partial isometries are indexed by finite binary sequences $\nu$ (or equivalently vertices of the rooted infinite binary tree).
They permit to decompose vectors $\xi\in\scrH$ in a diagrammatic fashion and moreover to describe the action $\sigma:F\act \scrH$.
In particular, if $g\in F$ is described by a pair of trees with leaves indexed by $\nu_i$ and $\mu_i$, then $\sigma(g)$ is equal to the finite sum of operators $\sum_i\tau_{\nu_i}^*\circ \tau_{\mu_i}$. 
This decomposition of both the vectors and the actions of $F$ using the $\tau_\nu$ plays a major role in our study. 
We establish a number of useful observations on these partial isometries.
Moreover, we obtain a corollary of von Neumann's ergodic theorem which relates projections of $\scrH$ and supports of Thompson's group elements.
This corollary will be key for proving Theorem \ref{letter-theo}.

In Section \ref{induced chapter} we prove our main result, Theorem \ref{letter-theo}, via the following strategy.
We fix a diffuse Pythagorean pair $(A,B)$ acting $\fH$ with associated representation $\sigma:F\act\scrH$.
For the sake of contradiction we assume that $\Ind_H^F\theta\subset\sigma$ where $\theta:H\act \fK$ is a non-trivial finite-dimensional representation of a subgroup $H\subset F$.
We introduce the set $$\mathscr P:=\{g\in F:\ g^n\notin H \text{ for all } n\geq 1\}.$$
Using projections defined by vertices and the von Neumann ergodic theorem we show that there exists a nonempty subset $Q$ of the Cantor space (i.e.~the set of all infinite binary sequences) satisfying that if $Q\not\subset\supp(g)$, then $g^n\in H$ for infinitely many $n\geq 1.$
We fix $u\in Q$.
Using tree-diagrams and the assumption we construct a subset of elements $\tilde H\subset F$ such that:
\begin{enumerate}
\item they all act trivially around $u$ and 
\item given any finite subset $K\subset \scrH$ we can extract a sequence $\{g_j\}_j\subset \tilde H$ with the property $\lim_j\langle \sigma(g_j) \xi,\xi\rangle=0$ for all $\xi\in K$.
\end{enumerate}
Since any $g\in \tilde H$ acts trivially around $u$ we have that $u\notin\supp(g)$ and thus $Q\not\subset \supp(g)$. Therefore, $g^n$ is not in $H$ for infinitely many $n\geq 1$.
This allows us to further assume that the sequence $\{g_j\}_j$ of above is contained in $H$.
We deduce that the restriction $\sigma\restriction_H$ is weakly mixing. 
This produces a contradiction since $\theta\subset \sigma\restriction_H$ is a non-trivial finite-dimensional subrepresentation of a weakly mixing one.
By slightly adjusting this proof we deduce Corollary \ref{letter-cor}.

We end the paper with Section \ref{sec:example} in which we describe the class of Pythagorean representations obtained from $(A,B)$ acting on $\fH=\C$.

%%%%%%%%%%%%%%%%%END INTRODUCTION%%%%%%%%%%%%%%%%%%%%%%%%%%%%%%%
%%%%%%%%%%%%%%%%%%%%%%%%%%%%%%%%%%%%%%%%%%%%%%%%%%%%%%%%%%%

\section{Preliminaries}\label{sec:preliminaries}

We recall definitions and facts concerning unitary representations of groups, Richard Thompson's groups $F,T,V$, and a particular case of Jones' construction. 
All of the content of the preliminaries can be found elsewhere and thus we will be rather brief. 
We take the occasion of this section to introduce terminology, notations, and fixing conventions.
We recommend the reader to consult \cite{BHV, kerr2016ergodic, BH} for materials concerning unitary representations, \cite{CFP} for Thompson's groups, \cite{Jo18,brothier2019haagerupT, BSurvey} for the general theory of Jones' technology, and \cite{BJ19} for the particular case of Pythagorean representations that we will be exclusively using in this article.

{\bf Convention.} 
All along the article we take the convention that all groups are equipped with the discrete topology, all Hilbert spaces are over the complex field $\C$, and are linear in the first variable. Moreover, all representation are unitary.

\subsection{Unitary representations of discrete groups}

\subsubsection{Unitary representations}
A \textit{unitary representation} $\sigma:G\to \cU(\fH_\sigma)$ of a group $G$ is a group morphism from $G$ to the unitary group $\cU(\fH_\sigma)$ of a Hilbert space $\fH_\sigma$. 
We will be exclusively considering \textit{unitary} representations and thus may drop the word ``unitary''. 
A representation of a group $G$ will be denoted $(\sigma,\fH)$, $\sigma:G\to\cU(\fH)$, or $G\act \fH$.
As usual we write $\sigma\restriction_H$ for the {\it restriction} of $\sigma$ to a subgroup $H\subset G$.

\subsubsection{Induced representations}
If $G$ is a group, $H\subset G$ a subgroup, and $\sigma:H\to\cU(\fK)$ a representation, then we define the {\it induced} representation of $\sigma$ to be
\begin{align*}
&\Ind_H^G\sigma:  G \to \cU(\ell^2(T,\fK))\\
& \Ind_H^G\sigma(g)f(t) = \sigma(\beta(g^{-1},t)^{-1})(f(g^{-1} \cdot t),
\end{align*}
where $g\in G, t\in T, f\in \ell^2(T,\fK)$, $T$ is a fixed set of representatives of $G/H$, so that $(g\cdot t,\beta(g,t))$ is the unique pair of $T\times H$ satisfying $gt=(g\cdot t) \beta(g,t).$

When $\dim(\fK)=1$, then we say that $\Ind_H^G\sigma$ is {\it monomial}.
Note that when $\sigma=1_H$ is the trivial representation, then $\Ind_H^G1_H$ is isomorphic to the {\it quasi-regular} representation $\la_{G/H}:G\act \ell^2(G/H)$.

\subsubsection{Properties of representations}\label{sec:prop-rep}

Consider a representation $\sigma:G\to\cU(\fH_\sigma)$ and recall that a \textit{subrepresentation} of $\sigma$ is a (topologically) closed vector subspace $\mathfrak K\subset \fH_\sigma$ that is closed under the action of $G$.
The representation $\sigma$ is called:
\begin{enumerate}
	\item \textit{irreducible} if it does not admit any proper non-zero subrepresentation (and is called \textit{reducible} otherwise);
	\item \textit{mixing} if $G$ is infinite and $\lim_{g\to\infty} |\langle \sigma_g\xi,\xi\rangle|=0$ for all $\xi\in\fH_\sigma$;
	\item \textit{weakly mixing} if it does not admit any finite-dimensional non-zero subrepresentation;
\end{enumerate}

Here is a well-known reformulation of weakly mixing (which is often taken as the definition), see \cite[Theorem 2.23]{kerr2016ergodic} for a proof and details. 

\begin{proposition}\label{prop:unitary-rep}
	A representation $\sigma$ is weakly mixing if and only if for every finite subset $K \subset \fH_\sigma$ and $\epsilon > 0$, there exists $g \in G$ such that $|\langle \sigma_g\xi,\xi\rangle | < \epsilon$ for all $\xi \in K$.
\end{proposition}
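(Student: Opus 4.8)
The plan is to prove the two implications separately, treating the ``if'' direction (approximation property $\Rightarrow$ weakly mixing) as the routine one and the ``only if'' direction as the substantial one. For the routine direction I would argue by contraposition. Suppose $\sigma$ admits a nonzero finite-dimensional subrepresentation $\fK\subset\fH_\sigma$ of dimension $n$ with orthonormal basis $e_1,\dots,e_n$. Since $\fK$ is $\sigma$-invariant, each $\sigma_g\restriction_\fK$ is a unitary $n\times n$ matrix, so $\sum_{i,j}|\langle\sigma_g e_i,e_j\rangle|^2=n$ for every $g$. By the polarisation identity each entry $\langle\sigma_g e_i,e_j\rangle$ is a fixed linear combination, with coefficients of modulus $1/4$, of the diagonal coefficients $\langle\sigma_g v,v\rangle$ as $v$ runs over the finite set $K:=\{e_i\pm e_j,\ e_i\pm\sqrt{-1}\,e_j:1\le i,j\le n\}$. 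Hence if all $|\langle\sigma_g v,v\rangle|<\epsilon$ for $v\in K$, then every matrix entry has modulus $<\epsilon$, forcing $n<n^2\epsilon^2$. Choosing $\epsilon\le 1/\sqrt n$ shows that no such $g$ exists, contradicting the approximation property for this particular pair $(K,\epsilon)$.

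For the converse I would pass to the tensor product representation $\pi:=\sigma\ot\ov\sigma$ on $\fH_\sigma\ot\ov{\fH_\sigma}$, identified with the Hilbert--Schmidt operators on $\fH_\sigma$ carrying the conjugation action $T\mapsto\sigma_g T\sigma_g^*$. The first step is the classical criterion that $\sigma$ is weakly mixing if and only if $\pi$ has no nonzero $G$-invariant vector: an invariant vector of $\pi$ is precisely a nonzero Hilbert--Schmidt (hence compact) operator commuting with every $\sigma_g$, and applying the spectral theorem to a nonzero eigenvalue of $T^*T$ yields a finite-dimensional invariant subspace, while conversely a finite-dimensional invariant subspace gives a finite-rank invariant projection, which is Hilbert--Schmidt.

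The second step, which I expect to be the main obstacle, is to convert ``no invariant vector'' into simultaneous smallness of matrix coefficients over a finite set, without any amenability of $G$. Here I would invoke the convexity form of the mean ergodic theorem, valid for an arbitrary group: for any unitary representation the closed convex hull of an orbit contains a unique element of minimal norm, namely the orthogonal projection onto the invariant vectors. Applied to $\pi$ with no invariant vector this gives $0\in\ov{\mathrm{conv}}\{\pi_g\zeta\}$ for every $\zeta$. To handle a finite set $K=\{\xi_1,\dots,\xi_m\}$ at once I would amplify: set $\zeta_k:=\xi_k\ot\ov{\xi_k}$, so that $\langle\pi_g\zeta_k,\zeta_k\rangle=|\langle\sigma_g\xi_k,\xi_k\rangle|^2\ge 0$, and apply the theorem to $\zeta_1\oplus\cdots\oplus\zeta_m$ under $\pi^{\oplus m}$, which still has no invariant vector. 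This produces $g_1,\dots,g_N$ and convex weights $c_i$ making $\sum_i c_i\sum_k|\langle\sigma_{g_i}\xi_k,\xi_k\rangle|^2$ as small as desired; since a convex combination of nonnegative numbers dominates their minimum, some single index $i$ already makes $\sum_k|\langle\sigma_{g_i}\xi_k,\xi_k\rangle|^2$ small, hence each $|\langle\sigma_{g_i}\xi_k,\xi_k\rangle|<\epsilon$. The delicate points are the correct bookkeeping for the conjugate Hilbert space and conjugation action in the tensor-product identification, and extracting a single group element from the ergodic average via the amplification and the ``minimum $\le$ average'' step.
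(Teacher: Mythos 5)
Your proof is correct, but there is nothing in the paper to compare it against: the paper does not prove this proposition, it simply cites \cite[Theorem 2.23]{kerr2016ergodic} for the proof. Your argument is essentially the standard one found in such references: for the easy direction, polarization plus the fact that the Hilbert--Schmidt norm squared of a unitary $n\times n$ matrix equals $n$; for the hard direction, the identification of invariant vectors of $\sigma\otimes\overline{\sigma}$ with Hilbert--Schmidt operators commuting with $\sigma$ (whence, via $T^*T$ and the spectral theorem, finite-dimensional invariant subspaces), followed by the minimal-norm (Alaoglu--Birkhoff) form of the mean ergodic theorem, which requires no amenability. The two points you flag as delicate are handled correctly: choosing $\zeta_k=\xi_k\otimes\overline{\xi_k}$ makes the coefficients $\langle\pi_g\zeta_k,\zeta_k\rangle=|\langle\sigma_g\xi_k,\xi_k\rangle|^2$ nonnegative, which is exactly what allows the passage from a small convex combination to a single group element, and $\pi^{\oplus m}$ indeed has no nonzero invariant vectors because each coordinate of such a vector would be $\pi$-invariant. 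The only (trivial) edge case left implicit is $K\subset\{0\}$, where any $g$ works.
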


From the definition it is clear that for infinite-dimensional representations irreducible implies weakly mixing. 
Proposition \ref{prop:unitary-rep} shows that mixing implies weak mixing.
Although, an irreducible representation is not necessarily mixing. 
Indeed, take any infinite self-commensurator subgroup $H\subset G$ and note that the quasi-regular representation $G\act \ell^2(G/H)$ is irreducible but not mixing from the Mackey-Schoda criterion \cite{Mack51} (see also \cite{Burger-Harpe97}). Take for instance $G=F$ and $H=F_{1/2}$ to be the parabolic subgroup of $g\in F$ fixing the dyadic rational $1/2$.

\subsection{Thompson's groups $F,T,$ and $V$}

In unpublished notes of 1960's Richard Thompson defined three groups $F,T,V$ satisfying $F\subset T\subset V$.
We will be focused on $F$ in this article but all the representations that we will define for $F$ extends to $V$ and thus this study stays relevant for all three Thompson's groups.

\subsubsection{Classical definitions as homeomorphism groups}

{\bf Description of $F$ as actions on $[0,1]$:}
Thompson's group $F$ is the group of piece-wise linear homeomorphisms of the unit interval $[0,1]$ having finitely many breakpoints all occurring at dyadic rationals $\Z[1/2]$ and having all slopes powers of $2$.
We will often identify $F$ as a subgroup of the homeomorphism group $\Homeo(0,1)$ of $[0,1].$

\subsubsection{Definition of $F$ involving partitions}
{\bf Standard dyadic interval.}
A common way to understand $F$ is to consider certain partitions of $[0,1]$.
A \textit{standard dyadic interval} (in short sdi) is an interval of the form $I=[\frac{a}{2^b}, \frac{a+1}{2^b}]$ with $a,b\in \N$ which is contained in $[0,1]$.
For technical reasons we may consider the half-open interval $\dot{I}:=[\frac{a}{2^b}, \frac{a+1}{2^b})$ and may identify $I$ with $\dot I$.

{\bf Standard dyadic partition.}
A \textit{standard dyadic partition} (in short sdp) is a finite list $\cI:=(I_1,\cdots,I_n)$ of sdi so that $\sup(I_j)=\inf(I_{j+1})$ for $1\leq j\leq n-1$ and so that $\inf(I_1)=0, \sup(I_n)=1$, i.e.~an ordered finite partition of $[0,1]$ into sdi up to removing certain endpoints of the sdi. 
Note that $\{\dot I_1,\cdots,\dot I_n\}$ is a partition of $[0,1)$ in the usual sense.

{\bf A partial order.}
If $\cI,\cJ$ are sdp and every sdi of $\cI$ is equal to the union of some sdi of $\cJ$, then we say that $\cI$ is a \textit{refinement} of $\cJ$ and write $\cI\leq \cJ$.
The set of all sdp equipped with $\leq$ is a directed partially ordered set (in short a directed poset). 

{\bf Description of $F$ using partitions and definitions of $T$ and $V$:}
Thompson's group $F$ is the group of maps $g:[0,1]\to[0,1]$ satisfying that there exists two sdp $(I_1,\cdots,I_n)$ and $(J_1,\cdots,J_n)$ for some common $n\geq 1$ so that $g$ maps bijectively in the unique increasing and affine way $I_k$ onto $J_k$ for each $1\leq k\leq n$.
To define $V$ we add in the data a permutation $\sigma$ of $\{1,\cdots,n\}$ so that $g$ maps $\dot I_k$ onto $\dot J_{\sigma(k)}$. The group $T$ is obtained by requiring that $\sigma$ is cyclic.
Note that elements of $T$ acts continuously on the torus $\R/\Z$ while elements of $V$ have finitely many discontinuous points (all at dyadic rationals).

\subsubsection{Description of $F$ acting on the Cantor space} \label{action cantor subsection}

\textbf{Cantor space.}
Recall, the Cantor space $\mathcal C:=\{0,1\}^{\N^*}$ is the space equal to all infinite sequences in $0, 1$ which is equipped with the product topology generated by the sets $I_w:= \{ w\cdot p: p\in \cC\}$ for all finite words $w$ in $0,1$. Note the sets $I_w$ are in fact open and closed since $\mathcal C\setminus I_w$ can be written as a finite union of $I_\nu$. The Cantor space is associated to the unit interval by the following map
\[S : \cC \rightarrow [0,1],\ (x_n : n \geq 1) \mapsto \sum_{n \in \N^*} 2^{-n}x_n\]
which is surjective and continuous. Each dyadic rational in $(0,1)$ has two preimages under $S$ while all other points have an unique preimage. In particular, for each finite word $w$, the binary sequences $(w0111\dots)$, $(w1000\dots)$ are mapped to the same dyadic rational. Additionally, eventually constant sequences are mapped to the dyadic rational, eventually periodic sequences are mapped to the rationals while not eventually periodic sequences are mapped to the irrationals. Furthermore, $S$ maps the sets $I_w$ to sdi's up to removing certain endpoints. This yields a bijection between $\{I_w : w \textrm{ is a finite word}\}$ and the set of sdi's. 

{\bf Notation.} For a sdi $I$, denote $m_I$ to be the finite word given by the above bijection. 

A desirable property of the above identification is that now each sdp of $[0,1]$ gives rise to a ``true'' partition of $\cC$ without needing to remove certain endpoints and removes the ambiguity of which sdi a dyadic rational in $(0,1)$ belongs to. 
This is a consequence of $\cC$ containing two copies of the dyadic rationals in $(0,1)$. 
For this reason in this paper we will often work on the Cantor space rather than the unit interval. Further, from hereon, we shall identify sdi's with the subsets $I_w$ of the Cantor space and similarly for sdp's.

\textbf{Description of $F$ as actions on the Cantor space.} Using the above correspondence and the definition of $F$ involving partitions we obtain an action of $F$ on the Cantor space in the obvious way. Indeed,
$F$ is the group of homeomorphism on $\cC$ satisfying there exists two ordered sdp's $(I_1, \dots, I_n)$ and $(J_1, \dots, J_n)$ for some common $n \geq 1$ such that $I_k$ is ``linearly'' mapped onto $J_k$ in the following way:
\begin{equation} \label{cantor action equation}
	m_{I_k} \cdot w \mapsto m_{J_k} \cdot w, \quad 1 \leq k \leq n, w \in \{0,1\}^{\N^*}.
\end{equation}
Similarly, we can obtain actions of $T,V$ as homeomorphisms on the Cantor space. It is clear that the map $S$ is $F$-equivariant with respect to the above action.

{\bf Support.}
The \textit{support} of an element $g \in F$ is the closure of all sequences in $\cC$ which are not fixed by $g$. That is, 
$$\supp(g) := \overline{\{p \in \cC : g(p) \neq p\}}.$$ From the description of $F$ acting on the Cantor space, it is clear that the support is always a finite disjoint union of sdi and is thus both open and closed.

\subsubsection{Diagrammatic description of elements of Thompson's groups} \label{diag desc subsection}

{\bf Infinite binary rooted tree.}
Consider the infinite binary rooted tree $t_\infty$ that we identify with a geometric realisation of it in the plane where the root is on top, the root has two neighbours placed at the bottom left and right of it, every other vertex has three neighbours: one above it, one on the bottom left, and one at the bottom right; the latter two being called \textit{immediate children} of the vertex.
We equip the tree with the orientation satisfying that each oriented edge is going from top to bottom. 
We call \textit{left and right edges} the oriented edges going in the bottom left and right directions, respectively.
Moreover, a pair of edges with a common source (going down) is called a \textit{caret}. We denote it with the symbol $\Y$.

{\bf Decoration of vertices with binary sequences.}
We will often identify vertices of $t_\infty$ with finite binary sequences (also referred to as finite words). 
Write $\Ver$ for the vertex set of $t_\infty$ and by $BS$ the set of finite binary sequences over the letters $0$ and $1$ including the empty one that we denote by $\emptyset$.
We denote a finite sequence with $n$ elements either as $x_1x_2\cdots x_n$ or as $(x_1,\cdots,x_n).$
We write $x\cdot y$ for the concatenation of two sequences, e.g.~$01\cdot 0011=010011.$ 
Consider the map $bs:\Ver \to BS$ satisfying that the image of the root is the empty sequence.
Moreover, if $\nu\in \Ver$ and $\nu_0,\nu_1$ are the left and right immediate children of $\nu$, then $bs(\nu_0)=bs(\nu)\cdot 0$ and $bs(\nu_1)=bs(\nu)\cdot 1.$
The map $bs$ defines a bijection.
We will often identify a vertex $\nu$ with its associated binary sequence $bs(\nu).$

{\bf Decoration of vertices with sdi.}
By identifying each vertex with its associated binary sequence $bs(\nu)$ we can decorate each vertex with the sdi $I_\nu$ (recall the definition of $I_\nu$ from Subsection \ref{action cantor subsection}).
Here is the beginning of this diagram (to allow for better intuition we have represented each sdi as a subset of the unit interval rather than as a subset of the Cantor space):
\begin{center}
	\begin{forest}
		[{$[0,1]$} [{$[0, \frac{1}{2}]$}	[{$[0, \frac{1}{4}]$}]
		[{$[\frac{1}{4}, \frac{1}{2}]$}]]
		[{$[\frac{1}{2}, 1]$}	[{$[\frac{1}{2}, \frac{3}{4}]$}]
		[{$[\frac{3}{4}, 1]$}]]]
	\end{forest}
\end{center}

Observe that we have a bijection between the sdi and the vertices of $t_\infty$.

{\bf Disjoint.}
Two vertices are said to be \textit{disjoint} if their associated sdi (as subsets of the Cantor space) are disjoint (otherwise one sdi would be contained in the other). This is equivalent to say that one is not the children of the other.

{\bf Left/right sides, and centre of $t_\infty$.}
The \textit{left (resp. right) side} of $t_\infty$ is the set of vertices whose binary sequence consists only of zeroes (resp. ones). 
The root node neither lies on the left nor right side of $t_\infty$. 
The \textit{centre} of $t_\infty$ is the set of vertices which is not the root node and neither lies on the left or right side of $t_\infty$.

{\bf Tree.}
We call a \textit{tree} any (nonempty) rooted finite subtree of $t_\infty$ so that each vertex has either two immediate children or none. 
A vertex with no immediate children is called a \textit{leaf}.
Moreover, we index increasingly the leaves of a tree from left to right starting at 1. 
We write $\fT$ for the collection of all trees and $\tar(t)$ for the number of leaves of $t$ where $\tar$ stands for ``target''. 
We write $\Leaf(t)$ for the set of leaves of a tree $t$ so that $\tar(t)=|\Leaf(t)|.$

{\bf Forest.}
We call a \textit{forest} a finite union of trees that we represent as finitely many trees placed next to each other ordered from left to right having all roots and leaves lining on two horizontal lines.
The roots and leaves of the forest are ordered from left to right and indexed by natural numbers starting at 1.
Here is an example of a forest with two roots and seven leaves:

\begin{center}
	\begin{tikzpicture}[baseline=0cm, scale = .8]
		\draw (0,0)--(-1.1, -1);
		\draw (0,0)--(1.1, -1);
		\draw (.55, -.5)--(.05, -1);
	\end{tikzpicture}%
	\hspace*{.5em}%
	\begin{tikzpicture}[baseline=0cm, scale = .8]
		\draw (0,0)--(-.6, -.5);
		\draw (0,0)--(.6, -.5);
		\draw (-.6, -.5)--(-1.1, -1);
		\draw (-.6, -.5)--(-.1, -1);
		\draw (.6, -.5)--(.1, -1);
		\draw (.6, -.5)--(1.1, -1);
	\end{tikzpicture}%
\end{center}

We write $\fF$ for the collection of all forests and $\Root(f),\Leaf(f)$ for the sets of roots and leaves of a forest $f$, respectively.

{\bf Composition of forests.}
When $f,g$ are forests so that the number of leaves of $f$ is equal to the number of roots of $g$ (i.e.~$|\Leaf(f)|=|\Root(g)|$), we consider the \textit{composition} of $f$ with $g$, written $g\circ f$, as the forest obtained by stacking vertically $g$ on the bottom of $f$ lining up the $j$th root of $g$ with the $j$th leaf of $f$.
This provides a partially defined associative binary operation on $\fF$ (and in fact confers a structure of category).

{\bf Partially ordered set.}
We equip $\fT$ with a poset (partially ordered set) structure $\preceq$ that is $t\preceq s$ if $t$ is a rooted subtree of $s$. 
From the description of $\fT$ as rooted subtrees of $t_\infty$ it is obvious that $(\fT,\preceq)$ is directed, i.e.~if $t,s\in\fT$, then there exists $r\in\fT$ satisfying $t\preceq r$ and $s\preceq r$.
Moreover, given $t,s\in\fT$, we have $t\preceq s$ if and only if there exists $f\in\fF$ satisfying $s=f\circ t.$

{\bf From trees to sdp.}
If $t$ is a tree, then for each of its leaves $\ell$ we have an associated sdi $I_\ell$.
Observe that the collection  $(I_\ell, \ \ell\in \Leaf(t))$ is a sdp which we write $sdp(t)$.
Hence, if $\{\nu_i\}_{i \in X} = \Leaf(t)$ from some tree $t$, then we say $\{\nu_i\}_{i \in X}$ is a sdp of vertices which correspond to $sdp(t)$.
The map $t\mapsto sdp(t)$ defines an isomorphism of posets from trees to sdp so that $t\preceq s$ implies that $sdp(s)$ is a refinement of $sdp(t)$.

\textbf{Tree-diagrams.}
A \textit{tree-diagram} is an ordered pair of trees $(t,s) \in \fT \times \fT$ such that $\vert \Leaf(t) \vert = \vert \Leaf(s) \vert$.
From the description of $F$ as acting on the Cantor space and the observation above we deduce that any element $g\in F$ is described by a tree-diagram.
This is not a one to one correspondence since clearly $(f\circ t,f\circ s)$ and $(t,s)$ correspond to the same Thompson's group element.

{\bf Description of $F$ using trees:}
We now provide the 	description of $F$ using tree-diagrams which is due to Brown \cite{brown1987}.
Consider the set $\cP$ of all tree-diagrams $(t,s)$. 
Let $\sim$ be the equivalence relation generated by $(f\circ t,f\circ s)\sim (t,s)$ where $f$ is a forest having the same number of roots as the number of leaves of $t$ (and thus of $s$).
Let $[t,s]$ be the class of $(t,s)$ in the quotient space $\cP/\sim$.
Define the binary operation 
$$[t,s]\circ [t',s']:=[f\circ t, f'\circ s'] \text{ for } f,f'  \text{ satisfying } f\circ s = f'\circ t'.$$
This binary operation is well-defined and confers a group structure to $\cP/\sim$ so that $[t,s]^{-1}=[s,t]$ and $[t,t]$ is the identity for each tree $t$.
Moreover, this group is isomorphic to Thompson's group $F$.

{\bf Reduced pair.}
Consider $g=[t,s]\in F$. A representative $(t,s)$ of $g$ is called \textit{reducible} (and \textit{irreducible} otherwise) if there exists $s',t',f$ with $f$ non-trivial so that $t=f\circ t'$ and $s=f\circ s'$. 
Note: in that case $(t',s')$ is again a representative of $g$.

{\bf Corresponding leaves.}
For a tree-diagram $(t,s)$, two leaves $\nu \in \Leaf(t), \omega \in \Leaf(s)$ are said to correspond to each other if they have the same numbered position, i.e~if $\nu$ is the $j$th leaf of $t$, then $\omega$ is the $j$th leaf of $s$.
By the second description of $F$, the element in $F$ associated with $[t,s]$ maps the sdi $I_\nu$ to $I_\omega$.

{\bf We will be mostly working with this description of $F$ and will thus consider pairs of trees for describing Thompson's group elements. }
Note that one can similarly define $T$ and $V$ using tree-diagrams in a similar way by considering permutations of leaves of trees.

\subsubsection{Description of $F$ using categories}
Another way to define $F$ is to consider the collection of all forests as a (small) category using the composition (i.e.~a set equipped with a partially defined associative binary operation and having an identity for each unit).
This category is cancellative and any pair of forests with the same number of leaves admit a left-common multiple  (this is known as Ore's property).
Hence, $\fF$ embedds in its (left-)groupoid of fractions (i.e.~we can formally manipulate inverses of forests obtaining a groupoid and moreover all element of the groupoid can be written (non-uniquely) as $f^{-1}\circ g$ with $f,g$ forests).
Now, by restricting to pairs of {\it trees} (rather than {\it forests}) we obtain $F$.
Hence, $F$ is the set of $t^{-1}\circ s$ with $t,s$ trees having the same number of leaves equipped with the composition of $\fF$ extended to formal inverses. 

\subsection{Particular trees and forests}

{\bf Particular trees.}
We use the symbol $\Y$ for the tree with two leaves which is equal to a caret.
The trivial tree is the tree having only one leaf (that is equal to its root) that we denote by $e$ or $I$.
We write $t_n$ for the regular tree with $2^n$ leaves all at distance $n$ from the root for the usual tree-metric.

{\bf Tensor product.}
Given two forests $f,g$ we define the \textit{tensor product} $f\ot g$ of them which is a forest obtained by concatenating horizontally $f$ to the left of $g$. 
Note that this is an associative binary operation on the set of forests $\fF$.
As a side remark, if we add the empty forest to $\fF$ (which is the neutral element for $\ot$), then we obtain that $(\fF,\circ,\ot)$ has a structure of a monoidal category.

{\bf Elementary forests.}
An \textit{elementary forest} is of the form $f_{k,n}$ where $1\leq k\leq n$ so that $f_{k,n}$ has $n$ roots, $n+1$ leaves, and all the trees of $f_{k,n}$ are trivial except the $k$th tree which has two leaves.
Using tensor product notation we get $f_{k,n}= I^{\ot k-1} \ot \Y \ot I^{\ot n-k}.$
If the context is clear we may write $f_{k}$ rather than $f_{k,n}.$
The set of elementary forests generates $\fF$ in the sense that any forest is a finite composition of elementary forests. Note that such a decomposition of forests is in general not unique.

\begin{remark}The single tree $\Y$ tensor-generates $\fF$ in the sense that: the smallest subset of $\fF$ that contains $\Y$, the trivial forests, and which is closed under composition and tensor product is equal to $\fF$.
	This is at the root of Jones' technology: one can construct an action of $F$ by defining how $\Y$ ``acts'', see \cite{BSurvey} for details.
\end{remark}

\subsection{Definition of Pythagorean representations} \label{sec:def-pyth}

We explain a particular case of Jones' technology allowing us to construct unitary representations of $F$ (and in fact all these representations extend to the larger Thompson's group $V$) using a pair of operators.

\subsubsection{Pythagorean pairs of operators}
Consider a Hilbert space $\fH$ and two (bounded linear) operators $A,B\in B(\fH)$.
We say that $(A,B)$ is a Pythagorean pair (over $\fH$) if:
$$A^*A + B^* B = \id_\fH$$
where $\id_\fH$ is the identify operator of $\fH$ and $A^*$ is the adjoint of $A$. Define the following corresponding universal $C^*$-algebra.

\begin{definition} \label{universal pythag algebra definition}
	The Pythagorean algebra $P=P_2$ is the universal $C^*$-algebra with generators $a,b$ and the unique relation
	\[a^*a + b^*b = 1.\]
\end{definition}

\subsubsection{Construction of a Hilbert space}

{\bf Hilbert spaces associated to trees.}
Fix a Pythagorean pair $(A,B)$ over $\fH$. 
For each tree $t\in\fT$ we define $\fH_t$ to be $\ell^2(\Leaf(t),\fH)$ the vector space of maps $\xi:\Leaf(t)\to \fH,\ell\mapsto\xi_\ell$ equipped with the inner product:
$$\langle \xi,\eta\rangle = \sum_{\ell\in\Leaf(t)}\langle \xi_\ell, \eta_\ell\rangle.$$
Note that $\fH_t$ is isomorphic to the $n$th direct sum $\fH^n:=\fH^{\oplus n}$ where $n$ is the number of leaves of $t$.
We will make this identification writing $(\xi_1,\cdots,\xi_n)$ the list of values taken by $\xi$ so that $\xi_i$ corresponds to the $i$th leaf of $t$, $1\leq i\leq n.$
To emphasise the tree $t$ and to avoid confusions we may write $(t, \xi)$ or $(t, \xi_1,\cdots,\xi_n)$ rather than $\xi$ or $(\xi_1,\dots,\xi_n).$
Another common way to think of $\xi\in\fH_t$ is to consider the tree $t$ so that each of its leaf $\ell$ is decorated by $\xi_\ell$. 
Hence, $\fH_t$ is the Hilbert space equal to all possible decorations of the leaves of $t$ with elements in $\fH$.
As an example, below is an element of $\fH_t$ where $t$ is the unique tree with two leaves and has been decorated with the vector $(\xi_1, \xi_2)$.

\begin{center}
	\begin{tikzpicture}[baseline=0cm]
		\draw (0,-.3)--(-.5, -.8);
		\draw (0,-.3)--(.5, -.8);
		
		\node[label={[yshift=-22pt] \normalsize $\xi_1$}] at (-.5, -.8) {};
		\node[label={[yshift=-22pt] \normalsize $\xi_2$}] at (.5, -.8) {};
	\end{tikzpicture}%
\end{center}

{\bf Directed system of Hilbert spaces.}
We consider now the family $(\fH_t:\ t\in\fT)$. 
We want to equip this family with a directed set structure.
We start by considering a certain family of isometries.
Let $f_{k,n}=I^{\ot k-1}\ot \Y \ot I^{\ot n-k}$ be an elementary forest with $1\leq k\leq n.$
Define the map: 
\begin{align*}
	\Phi(f_{k,n})=\Phi_{A,B}(f_{k,n}):& \fH^n\to \fH^{n+1},\\
	& (\xi_1,\cdots,\xi_n)\mapsto (\xi_1,\cdots,\xi_{k-1}, A(\xi_k), B(\xi_k), \xi_{k+1},\cdots,\xi_n).
\end{align*}
Note that since $(A,B)$ is a Pythagorean pair we have that $\Phi(f_{k,n})$ is an isometry.

Now, any forest $f$ is a finite product of elementary forests $f=f_{k_1,n_1}\circ\cdots\circ f_{k_r,n_r}$.
We put $\Phi(f):=\Phi(f_{k_1,n_1})\circ\cdots\circ \Phi(f_{k_r,n_r})$.
This is well-defined (if $f$ is written as a different composition of elementary forests we still obtain the same operator). 
Note that $\Phi(f)$ is an isometry for any forest $f$ since it is the composition of some isometries.

{\bf Connecting maps and limits.}
Consider now $t,s\in\fT$ satisfying $t\preceq s$.
There exists a unique forest $f\in\fF$ satisfying $s=f\circ t$.
Define the map 
$$\iota_t^s:\fH_t\to \fH_{s},\ (t,\xi)\mapsto (s,\Phi(f)(\xi)).$$
We obtain a directed system $(\fH_t, \iota_t^s:\ t,s\in\fT, t\preceq s)$ of Hilbert spaces and isometries.
Its directed limit (or inductive limit or colimit) $\scrK=\scrK_{A,B}:=\varinjlim_{t\in\fT} \fH_t$ is a preHilbert space that we complete into a Hilbert space $\scrH=\scrH_{A,B}$.

{\bf Diagrammatic description of the limit space.}
Here is a convenient way to think about the limit space $\scrK$. 
Consider a tree $t$ and index its leaves by some elements in $\fH$. 
We obtain an element $(t,\xi)\in \fH_t$. 
If we add a caret to $t$ at the $k$th leaf we obtain a larger tree $t'\geq t$ that we decorate with $\xi'$ just as $t$ except that the $k$th leaf is decorated by $A(\xi_k)$ and the $(k+1)$th by $B(\xi_k).$
Put $(t,\xi)\sim (t',\xi')$ and consider the smallest equivalence relation generated by it. 
The space $\scrK$ is equal to all decorated trees quotiented by $\sim.$
Hence, any element of $\scrK$ admits a representative $(t,\xi)$ and write $[t,\xi]$ for the class associated to it.
Below we provide an example of the equivalence relation $\sim$ on $\scrK$.
\begin{center}
	\begin{tikzpicture}[baseline=0cm]
		\draw (0,-.3)--(-.5, -.8);
		\draw (0,-.3)--(.5, -.8);
		
		\node[label={[yshift=-22pt] \normalsize $\xi_1$}] at (-.5, -.8) {};
		\node[label={[yshift=-22pt] \normalsize $\xi_2$}] at (.5, -.8) {};
		
		\node at (1.1, -.7) {$\sim$};
	\end{tikzpicture}%
	\begin{tikzpicture}[baseline=0cm]
		\draw (0,0)--(-.5, -.5);
		\draw (0,0)--(.5, -.5);
		\draw (-.5, -.5)--(-.9, -1);
		\draw (-.5, -.5)--(-.1, -1);
		
		\node[label={[yshift=-22pt] \normalsize $A\xi_1$}] at (-.9, -1) {};
		\node[label={[yshift=-22pt] \normalsize $B\xi_1$}] at (-.1, -1) {};
		\node[label={[yshift=-22pt] \normalsize $\xi_2$}] at (.5, -.5) {};		
	\end{tikzpicture}%
\end{center}

For all trees $t$ the direct sum $\fH_t$ naturally embeds inside $\scrK$ given by $\fH_t \ni \xi \mapsto [t, \xi] \in \scrK$. In the sequel, if $\xi \in \fH$ we shall commonly identify $\xi$ with its image $[e, \xi]$ inside $\scrK$.

\begin{remark} \label{scrH dimension remark}
	It is important to note that even when $\fH$ is finite-dimensional, the larger Hilbert space $\scrH$ will always be infinite-dimensional. This can be easily observed by considering the above diagrammatic description of $\scrK$. Indeed, take any non-zero vector $\xi \in \fH$ and for $n \in \N^*$ define $z_n$ to be the element in $\scrK$ formed by indexing the second leaf of $t_n$ with $\xi$ and decorating all the other leaves with zeros. It can then be observed that $\{z_n\}_{n \in \N^*}$ forms an orthogonal set in $\scrH$.
\end{remark}

\subsubsection{The Jones representation associated to a Pythagorean pair}

As before consider a Pythagorean pair $(A,B)$ over a Hilbert space $\fH$, the associated directed system $(\fH_t,\iota_t^s:\ t,s\in\fT, t\preceq s)$, the isometries $(\Phi(f):\ f\in\fF)$ and the limit spaces $\scrK$ and $\scrH$.
We want to construct a (unitary) representation of $F$ on $\scrH$.
Consider $g\in F$ and $z \in \scrK$.
There exists some trees $s,t,r\in\fT$ and a vector $\xi\in\fH_r$ so that $g=[t,s]$ and $z =[r,\xi]$.
Since $(\fT,\preceq)$ is directed there exists $w\in\fT$ so that $s\preceq w$ and $r\preceq w$ and thus some forests $f,h$ satisfying $w=f\circ s = h\circ r$.
Using the equivalence relations defining $F$ and $\scrK$ we have: 
$$g=[t,s]=[f\circ t,f\circ s]=[f\circ t,w] \text{ and } z=[r,\xi]=[h\circ r, \Phi(h)(\xi)] = [w,\Phi(h)(\xi)].$$
We set: 
$$\sigma(g)z:=\sigma([f\circ t,w]) [w,\Phi(h)(\xi)] := [f\circ t,\Phi(h)(\xi)].$$
This defines an action by unitary operators of $F$ on $\scrK$ which extends into a unitary representation $\sigma=\sigma_{A,B}$ acting on $\scrH$.
This is indeed well-defined (the formula does not depend on the choice of the representatives $(t,s)$ and $(r,\xi)$ nor depend on the choice of $f,h$).

\begin{definition}We call $\sigma_{A,B}$ the \textit{Jones representation or Jones action} associated to the Pythagorean pair $(A,B)$ or the \textit{Pythagorean representation} associated to $(A,B)$.
\end{definition}

{\bf Visualisation of the Jones action.}
Here is one key example to keep in mind for visualising $\sigma$.
Consider $g=[t,s]\in F$ and $z=[s,\xi]\in \scrH$ where we have taken the second tree $s$ of the representative of $g$ to be equal to the tree of the representative of $z$.
The element $z$ corresponds to the (class of the) tree $s$ with leaves decorated by $\xi=(\xi_1,\dots,\xi_n)$ where $n$ is the number of leaves of $s$ (and thus of $t$).
Now, $\sigma(g)(z)=[t,\xi]$ corresponds to the tree $t$ (instead of $s$) with the same decoration $\xi$ of leaves.
Hence, the Jones action did not change the decoration but only the tree.
An example of Jones' action is shown below.

\begin{center}
	\begin{tikzpicture}[baseline=0cm]
		\draw (0,0)--(-.5, -.5);
		\draw (0,0)--(.5, -.5);
		\draw (-.5, -.5)--(-.9, -1);
		\draw (-.5, -.5)--(-.1, -1);
		
		\node[label={\normalsize $\sigma($}] at (-1.1, -1) {};
		\node[label={\normalsize $,$}] at (.65, -1) {};
	\end{tikzpicture}%
	\begin{tikzpicture}[baseline=0cm]
		\draw (0,0)--(-.5, -.5);
		\draw (0,0)--(.5, -.5);
		\draw (.5, -.5)--(.1, -1);
		\draw (.5, -.5)--(.9, -1);
		
		\node[label={\normalsize $)$}] at (1.1, -1) {};
		\node[label={\normalsize $\cdot$}] at (1.35, -.9) {};
	\end{tikzpicture}%
	\begin{tikzpicture}[baseline=0cm]
		\draw (0,0)--(-.5, -.5);
		\draw (0,0)--(.5, -.5);
		\draw (.5, -.5)--(.1, -1);
		\draw (.5, -.5)--(.9, -1);
		
		\node[label={[yshift=-22pt] \normalsize $\xi_1$}] at (-.5, -.5) {};
		\node[label={[yshift=-22pt] \normalsize $\xi_2$}] at (.1, -1) {};
		\node[label={[yshift=-22pt] \normalsize $\xi_3$}] at (.9, -1) {};	
		
		\node[label={\normalsize $=$}] at (1.35, -1) {};
	\end{tikzpicture}%
	\begin{tikzpicture}[baseline=0cm]
		\draw (0,0)--(-.5, -.5);
		\draw (0,0)--(.5, -.5);
		\draw (-.5, -.5)--(-.9, -1);
		\draw (-.5, -.5)--(-.1, -1);
		
		\node[label={[yshift=-22pt] \normalsize $\xi_1$}] at (-.9, -1) {};
		\node[label={[yshift=-22pt] \normalsize $\xi_2$}] at (-.1, -1) {};
		\node[label={[yshift=-22pt] \normalsize $\xi_3$}] at (.5, -.5) {};		
	\end{tikzpicture}%
\end{center}

{\bf Categorical interpretation.}
Initially, Jones' actions were defined using categories and functors as we are about to explain. 
This description will not be used or referred later in the paper. It is here for the curiosity of the reader. 
Consider the monoidal category of binary forests $(\fF,\circ,\otimes)$ where the objects of $\fF$ are the natural numbers, the morphisms the forests, the composition $\circ$ the vertical concatenation, and the monoidal structure $\ot$ the horizontal concatenation.
Consider now the category of Hilbert spaces with isometries for morphisms and equip it with the monoidal structure given by the \textit{direct sum} of Hilbert spaces (hence not the usual monoidal structure given by the Hilbert space tensor product).
Now, choosing a Pythagorean pair $(A,B)$ corresponds in choosing a covariant monoidal functor from the category of binary forests to the category of Hilbert spaces. 
The functor being the $\Phi$ we used above satisfying $\Phi(\Y)(\xi) = (A\xi,B\xi)\in \fH\oplus\fH$ for $\xi\in\fH:=\Phi(1).$
Jones' technology can be applied to construct unitary representations of $F$ using \textit{any} functor from $\fF$ to the category of Hilbert spaces (not necessarily covariant nor monoidal). 
Hence, we are considering a very special kind of those.
Moreover, it provides an action of the Thompson groupoid (the whole fraction groupoid obtained from $\fF$) that we restrict in this paper to the Thompson group.

%%%%%%%%%%%END PRELIMINARIES%%%%%%%%%%%%%%%%%%%%%%%%%%%%%%%%%%%%
%%%%%%%%%%%%%%%%%%%%%%%%%%%%%%%%%%%%%%%%%%%%%%%%%%%%%%%%%%

\section{Tools and general results for Pythagorean representations}\label{pythag rep section}

In this section we fix a Pythagorean pair $(A,B)$ acting on $\fH$ and consider the associated representation $(\sigma,\scrH)$ of $F$.
We recall that $\scrK\subset\scrH$ is the dense subspace of equivalence classes $[t,\xi]$ of trees $t$ whose leaves are decorated by vectors of $\fH$.

\subsection{Definitions of some partial isometries} \label{partial isom section}

\textbf{Partial isometries associated to a vertex.}

Consider a tree $t$ and a vector $[t,\xi]\in \scrK$: that is the tree $t$ with its leaves $\ell$ decorated by elements $\xi_\ell$ of $\fH$.
Given any vertex $\nu$ of the infinite binary rooted tree we want to define the {\it $\nu$-component} of $[t,\xi]$.
\begin{itemize}
	\item If $\nu$ is a leaf $\ell$ of $t$, then it is $\xi_\ell\in\fH$ that we interpret as an element of $\scrH$. 
	\item If $\nu$ is not a vertex of $t$, then we choose another representative $(t',\xi')$ of $[t,\xi]$ such that $\nu$ is a leaf $\ell'$ of $t'$ and take $\xi'_{\ell'}$. Note that $t'$ is necessarily of the form $f\circ t$ where $f$ is a forest we attach on the bottom of $t$.
	\item If $\nu$ is an interior vertex of $t$ (a vertex of $t$ that is not a leaf), then we consider $t_\nu$ the subtree of $t$ rooted at $\nu$ and having same children of $\nu$ than $t$. In particular, the leaves of $t_\nu$ forms a subset of the leaves of $t$. The $\nu$-component is then $[t_\nu,\eta]\in\fH_{t_\nu}$ deduced from the subtree $t_\nu$ of $t$ and the decoration of leaves being the restriction of the one of $t$ that is $\eta:\Leaf(t_\nu)\ni \ell\mapsto \xi_\ell$.
\end{itemize}

Intuitively, taking the $\nu$-component consists of ``growing'' the tree large enough and then ``snipping'' the tree at the vertex $\nu$. We formalise this notion in the following proposition.

\begin{proposition}
	For any $\nu\in\Ver$, taking the $\nu$-component is a well-defined map from $\scrK$ to $\scrK$ that extends into a surjective partial isometry
	$$\tau_\nu:\scrH\to \scrH.$$
\end{proposition}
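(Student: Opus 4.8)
The plan is to prove the statement in three stages: first that the $\nu$-component is well-defined on the dense subspace $\scrK$, then that the resulting linear map is a contraction (so that it extends to $\scrH$), and finally to identify its initial and final spaces so as to conclude that $\tau_\nu$ is a surjective partial isometry. It is convenient to first unify the three bullet cases of the definition: for a class $[t,\xi]$ I would grow $t$ to a representative $(t',\xi')$ with $\nu\in\Ver(t')$ (always possible, since $\fT$ is directed and refining $t$ to contain the path from the root to $\nu$ only enlarges the vertex set), and then declare the $\nu$-component to be $[(t')_\nu,\xi'|_{\Leaf((t')_\nu)}]\in\scrK$, where $(t')_\nu$ is the subtree rooted at $\nu$. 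When $\nu$ is a leaf this recovers the single vector $\xi'_\nu\in\fH\subset\scrK$, so all three cases are subsumed.

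The delicate heart of the argument is independence of the representative. Since any two representatives with $\nu$ a vertex admit a common refinement that still has $\nu$ as a vertex, it suffices to check invariance under one elementary caret addition at a leaf $\mu$ of $t'$. I would split into cases by the position of $\mu$ relative to $\nu$. If $\mu$ is a strict descendant of $\nu$, the move is an admissible growth taking place inside the subtree $(t')_\nu$, so the component is literally unchanged in $\scrK$ by the definition of the equivalence relation generating $\scrK$. If $\mu=\nu$, then $\nu$ is a leaf and the component changes from $\xi'_\nu$ to $[\Y,(A\xi'_\nu,B\xi'_\nu)]$, which equals $\xi'_\nu$ by the generating relation. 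If $\mu$ is disjoint from $\nu$, the subtree $(t')_\nu$ and its decoration are untouched. The remaining possibility, that $\nu$ is a strict descendant of the leaf $\mu$, cannot occur since a leaf of $t'$ has no descendants in $t'$. This settles well-definedness.

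Linearity is then immediate: any two elements of $\scrK$ share a common representing tree $t$ with $\nu\in\Ver(t)$, and on $\fH_t$ the $\nu$-component is the coordinate restriction to $\Leaf(t_\nu)$, which is linear. Because every connecting map $\Phi(f)$ is an isometry, each $\fH_t$ embeds isometrically in $\scrH$, so $\|\tau_\nu[t,\xi]\|^2=\sum_{\ell\in\Leaf(t_\nu)}\|\xi_\ell\|^2\le\sum_{\ell\in\Leaf(t)}\|\xi_\ell\|^2=\|[t,\xi]\|^2$. Hence $\tau_\nu$ is a contraction on $\scrK$ and extends uniquely to a bounded operator on $\scrH$.

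To see that the extension is a surjective partial isometry I would exhibit its initial space. Let $\scrK_\nu$ be the span of classes $[t,\xi]$ (with $\nu\in\Ver(t)$) whose decoration is supported on $\Leaf(t_\nu)$, and let $\scrK_\nu'$ be those supported off $\Leaf(t_\nu)$. Using $A0=B0=0$ one checks that both notions are preserved under refinement, that $\scrK_\nu$ and $\scrK_\nu'$ are orthogonal on any common tree, and that together they span $\scrK$; setting $\scrH(\nu):=\overline{\scrK_\nu}$ therefore gives $\scrH=\scrH(\nu)\oplus\scrH(\nu)^\perp$. On $\scrK_\nu$ the norm estimate above is an equality, so $\tau_\nu$ is isometric there, while on $\scrK_\nu'$ the restriction $\xi|_{\Leaf(t_\nu)}$ vanishes, so $\tau_\nu=0$; thus $\tau_\nu$ is a partial isometry with initial space $\scrH(\nu)$. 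For surjectivity, any $[s,\eta]\in\scrK$ is attained by building $t$ that grafts a copy of $s$ at $\nu$ (with the caret path to $\nu$ filled in), decorating the leaves below $\nu$ by $\eta$ and all others by $0$, so that $\tau_\nu[t,\xi]=[s,\eta]$. The range then contains the dense subspace $\scrK$ and, being the isometric image of the closed space $\scrH(\nu)$, is itself closed, hence equal to $\scrH$. I expect the case analysis for invariance under caret additions to be the main obstacle; everything afterward is the routine bookkeeping of an inductive-limit construction with isometric connecting maps.
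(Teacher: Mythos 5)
Your proof is correct and takes essentially the same approach as the paper's: your subspace $\scrK_\nu$ of classes supported on $\Leaf(t_\nu)$ coincides with the paper's $\fX_\nu$ (elements whose $\omega$-components vanish for all $\omega$ disjoint from $\nu$), and both arguments conclude by showing the map is isometric on this subspace, zero on its orthogonal complement, and has dense (hence, by closedness of the isometric image, full) range. The only difference is one of detail: you spell out the caret-by-caret well-definedness check and the explicit preimage construction for surjectivity, which the paper compresses into the remark that $\Phi$ is a monoidal functor.
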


\begin{proof}
	It is not difficult to observe that taking the $\nu$-component is a well-defined map from $\scrK$ to $\scrK$ as a consequence of $\Phi$ being a monoidal functor. This map is clearly surjective and linear. Define the subspace $\fX_\nu \subset \scrK$ consisting of all elements such that their $\omega$-component (in the sense as described in the beginning of the subsection) is zero for all vertices $\omega$ which are disjoint from $\nu$ (recall $\nu$ and $\omega$ are disjoint if their associated sdi's are disjoint). 
	Now, if $\xi\in\fX_\nu$, then the $\nu$-component of $\xi$ has same norm than $\xi$.
Further, if $x \in \fX_\nu^\perp \cap \scrK$ then necessarily the $\nu$-component of $x$ must be $0$. Thus the map taking the $\nu$-component is bounded and can be continuously extended to a surjective partial isometry $\tau_\nu$ on $\scrH$ with initial space the norm-closure of $\fX_\nu$.
\end{proof}

\begin{example}
	Consider $[s,\xi]$ where $s=\Y$ is the tree with two leaves, $\xi=(\xi_1,\xi_2)$, and $\nu=01$.
	Note that $(s,\xi)\sim (f_1\circ s, (A\xi_1, B\xi_1, \xi_2))$ and thus the $01$-component of $[s,\xi]$ is $B\xi$ as shown by the diagram.
	\begin{center}
		\begin{tikzpicture}[baseline=0cm, scale = 1]
			\draw (0,0)--(-.5, -.5);
			\draw (0,0)--(.5, -.5);
			
			\node[label={[yshift=-22pt] \normalsize $\xi_1$}] at (-.5, -.5) {};
			\node[label={[yshift=-22pt] \normalsize $\xi_2$}] at (.5, -.5) {};
			
			\node[label={[yshift= -3pt] \normalsize $\Phi_{A,B}(f_1)$}] at (2, -.7) {$\sim$};
		\end{tikzpicture}%
		\begin{tikzpicture}[baseline=0cm, scale = 1]
			\draw (0,0)--(-.5, -.5);
			\draw (0,0)--(.5, -.5);
			\draw (-.5, -.5)--(-.9, -1);
			\draw (-.5, -.5)--(-.1, -1);
			
			\node[label={[yshift=-22pt] \normalsize $A\xi_1$}] at (-.9, -1) {};
			\node[label={[yshift=-22pt] \normalsize $B\xi_1$}] at (-.1, -1) {};
			\node[label={[yshift=-22pt] \normalsize $0$}] at (.5, -.5) {};
			
			\node[label={[yshift= -3pt] \normalsize $\tau_{01}$}] at (1.4, -.7) {$\longmapsto$};
		\end{tikzpicture}%
		\begin{tikzpicture}[baseline=0cm, scale = 1]
			\node[label={[yshift=-25pt] \normalsize $B\xi_1$}] at (0, -.5) {$\bullet$};
		\end{tikzpicture}%
	\end{center}
\end{example}

\begin{remark} \label{partial isom remark}
	\begin{enumerate}[i]
		\item
		Since $\tau_\nu$ (for a given vertex $\nu$) is a surjective partial isometry we have that its adjoint $\tau_\nu^*$ is an isometry being a right-inverse of $\tau_\nu$. Moreover, note that $\tau_\omega\circ \tau_\nu^*=0$ for all vertex $\omega$ which is disjoint from $\nu$. 
		Graphically, $\tau^*_\nu$ acts on $(t, \xi) \in \fH_t$ by ``lifting'' the tree $t$ and attaching the resulting subtree, along with the components of $\xi$, at the vertex $\nu$ while setting all other components to $0$.
		\item Define the orthogonal projection $$\rho_\nu := \tau_\nu^*\tau_\nu$$ which sets the $\omega$-components of an element to $0$ for all vertices $\omega$ that are disjoint from $\nu$.	If $\{\nu_i\}_{i=1}^n$ is a sdp then $\sum_{i=1}^n \rho_{\nu_i} = \id$.
		The action of $\rho_0$ is shown below.
		
		\begin{center}
			\begin{tikzpicture}[baseline=0cm, scale = 1]
				\draw (0,0)--(-.7, -.5);
				\draw (0,0)--(.7, -.5);
				\draw (-.7, -.5)--(-1.1, -1);
				\draw (-.7, -.5)--(-.3, -1);
				\draw (.7, -.5)--(.3, -1);
				\draw (.7, -.5)--(1.1, -1);
				
				\node[label={[yshift=-22pt] \normalsize $\xi_1$}] at (-1.1, -1) {};
				\node[label={[yshift=-22pt] \normalsize $\xi_2$}] at (-.3, -1) {};
				\node[label={[yshift=-22pt] \normalsize $\xi_3$}] at (.3, -1) {};
				\node[label={[yshift=-22pt] \normalsize $\xi_4$}] at (1.1, -1) {};
				
				\node[label={[yshift= -3pt] \normalsize $\rho_{0}$}] at (1.6, -.7) {$\longmapsto$};
			\end{tikzpicture}%
			\begin{tikzpicture}[baseline=0cm, scale = 1]
				\draw (0,0)--(-.7, -.5);
				\draw (0,0)--(.7, -.5);
				\draw (-.7, -.5)--(-1.1, -1);
				\draw (-.7, -.5)--(-.3, -1);
				\draw (.7, -.5)--(.3, -1);
				\draw (.7, -.5)--(1.1, -1);
				
				\node[label={[yshift=-22pt] \normalsize $\xi_1$}] at (-1.1, -1) {};
				\node[label={[yshift=-22pt] \normalsize $\xi_2$}] at (-.3, -1) {};
				\node[label={[yshift=-22pt] \normalsize $0$}] at (.3, -1) {};
				\node[label={[yshift=-22pt] \normalsize $0$}] at (1.1, -1) {};
				
				\node at (1.6, -.5) {$\sim$};
			\end{tikzpicture}%
			\begin{tikzpicture}[baseline=0cm, scale = 1]
				\draw (0,0)--(-.7, -.5);
				\draw (0,0)--(.7, -.5);
				\draw (-.7, -.5)--(-1.1, -1);
				\draw (-.7, -.5)--(-.3, -1);
				
				\node[label={[yshift=-22pt] \normalsize $\xi_1$}] at (-1.1, -1) {};
				\node[label={[yshift=-22pt] \normalsize $\xi_2$}] at (-.3, -1) {};
				\node[label={[yshift=-22pt] \normalsize $0$}] at (.7, -.5) {};		
			\end{tikzpicture}%
		\end{center}		
		
		More generally, $\tau^*_\nu\tau_\omega$ is the partial isometry which ``snips'' the tree at $\omega$ and attaches the resulting subtree, along with its components, at the vertex $\nu$ while setting all other components to $0$. Below shows an example of how the map $\tau^*_{0}\tau_{1}$ acts.
		
		\begin{center}
			\begin{tikzpicture}[baseline=0cm, scale = 1]
				\draw (0,0)--(-.7, -.5);
				\draw (0,0)--(.7, -.5);
				\draw (-.7, -.5)--(-1.1, -1);
				\draw (-.7, -.5)--(-.3, -1);
				\draw[thick] (.7, -.5)--(.3, -1);
				\draw[thick] (.7, -.5)--(1.1, -1);
				
				\node[label={[yshift=-22pt] \normalsize $\xi_1$}] at (-1.1, -1) {};
				\node[label={[yshift=-22pt] \normalsize $\xi_2$}] at (-.3, -1) {};
				\node[label={[yshift=-22pt] \normalsize $\xi_3$}] at (.3, -1) {};
				\node[label={[yshift=-22pt] \normalsize $\xi_4$}] at (1.1, -1) {};
				
				\node[label={[yshift= -3pt] \normalsize $\tau_{1}$}] at (1.6, -.7) {$\longmapsto$};
			\end{tikzpicture}%
			\begin{tikzpicture}[baseline=0cm, scale = 1]
				\draw[thick] (0,-.3)--(-.5, -.8);
				\draw[thick] (0,-.3)--(.5, -.8);
				
				\node[label={[yshift=-22pt] \normalsize $\xi_3$}] at (-.5, -.8) {};
				\node[label={[yshift=-22pt] \normalsize $\xi_4$}] at (.5, -.8) {};
				
				\node[label={[yshift= -3pt] \normalsize $\tau^*_{0}$}] at (1.1, -.7) {$\longmapsto$};
			\end{tikzpicture}%
			\begin{tikzpicture}[baseline=0cm, scale = 1]
				\draw (0,0)--(-.7, -.5);
				\draw (0,0)--(.7, -.5);
				\draw[thick] (-.7, -.5)--(-1.1, -1);
				\draw[thick] (-.7, -.5)--(-.3, -1);
				
				\node[label={[yshift=-22pt] \normalsize $\xi_3$}] at (-1.1, -1) {};
				\node[label={[yshift=-22pt] \normalsize $\xi_4$}] at (-.3, -1) {};
				\node[label={[yshift=-22pt] \normalsize $0$}] at (.7, -.5) {};		
			\end{tikzpicture}%
		\end{center} 
	\end{enumerate}
\end{remark}

\subsection{Some Identities of Pythagorean representations}
Viewing elements of Thompson's group $F$ as a pair of trees in combination with the family of partial isometries $\{\tau_\nu\}_{\nu \in \Ver}$ provides a powerful method for analysing properties of Pythagorean representations. Below we list some obvious but useful identities of Pythagorean representations involving these partial isometries which we shall frequently refer to later in the paper.  
\begin{enumerate}[i]
	
	\item For all sdi $I$ there is a group isomorphism $\alpha:\Fix_F(I^c)\to F$ where $I^c$ is the complement of $I$ and $\Fix_F(I^c)$ is the subgroup of $g\in F$ fixing all points of $I^c$.
	Indeed, there exists a finite word $w$ such that $I=\{ w\cdot x:\ x\in \{0,1\}^\N\}.$ Now, consider the map $w\cdot x\to x$ from $I$ to the whole Cantor space (corresponding to scaling $I$ to $[0,1]$ in the unique affine way if we work over real numbers rather than sequences). It induces the isomorphism of above. Moreover, $\alpha$ extends to a surjection from $\Stab_F(I)$ onto $F$ where $\Stab_F(I)$ stands for the stabiliser subgroup of $I$ in $F$.
	
	\item We now interpret (i) using the bijection between sdi and vertices. 
	Given a vertex $\nu$ and a tree $t$ we consider $t_\nu$ as defined in Section \ref{partial isom section} being the maximal subtree of $t$ rooted at $\nu$.
	If $g=[t,s]$ is in $\Stab_F(I_\nu)$, then $g_\nu := [t_\nu,s_\nu]$ is in $F$ ($t_\nu, s_\nu$ necessarily have the same number of leaves) and corresponds to the affine scaling of $g$ when restricted to $I_\nu$ as defined in (i).	
	
	\item The mapping $$\Stab_F(I_\nu)\to F,\ g\mapsto g_\nu$$ of (ii) is compatible with the Jones action $\sigma$:
	\begin{equation} \label{action to stabaliser equation}
		\sigma(g_\nu) \circ \tau_\nu = \tau_\nu \circ \sigma(g).
	\end{equation}
	Further, if $\{\nu_i\}_{i = 1}^n$ is a sdp such that $g \in \Stab_F(\nu_i)$ for all $i = 1, 2, \dots n$ then:
	\begin{equation} \label{action decomposition}
		\sigma(g) = \sum_{i = 1}^n \tau^*_{\nu_i} \circ \sigma(g_{\nu_i}) \circ \tau_{\nu_i}.	
	\end{equation}
	
	\item Let $z \in \scrH$ and $g := [t, s] \in F$ where $\Leaf(t) := \{\nu_i\}_{i = 1}^n$ and $\Leaf(s) := \{\omega_i\}_{i = 1}^n$. Then for all $ i = 1, 2, \dots, n$:
	\[\tau_{\nu_i} \circ \sigma(g) = \tau_{\omega_i}.\]
	Diametrically, for $z := [t, \xi]$, this can be viewed as the action $\sigma(g)$ takes the \textit{subtree} which has root node at $\omega_i$ and then attaches it to the vertex $\nu_i$ along with the components of $\xi$ which are children of $\omega_i$. This is extended by the following identity:
	\begin{equation} \label{action rearrange equation}
		\sigma(g) = \sum_{i = 1}^n \tau^*_{\nu_i} \tau_{\omega_i}.
	\end{equation}
	
	\item Let $z \in \scrH$ and let $g$ be defined as above. From the above statement, $\sigma(g)z = z$ if and only if:
	\begin{equation} \label{equality vertices equations}
		\tau_{\nu_i}(z) = \tau_{\omega_i}(z),\quad \textrm{for all } i = 1, 2, \dots, n.
	\end{equation}
\end{enumerate} 

\subsection{Diffuse Pythagorean pairs.}
Recall the \textit{strong operator topology} (SOT) of $B(\fH)$ is the locally convex topology obtained from the seminorms $B(\fH)\ni T\mapsto \|T(\xi)\|$ indexed by the vectors $\xi\in\fH$.
In particular, a net of operators $(T_i)_{i\in I}$ converges to $T$ for the SOT, denoted 
$$T_i\xrightarrow{s} T$$ 
when $\lim_{i\in I}\|T(\xi)-T_i(\xi)\|=0$ for all $\xi\in\fH$.

\begin{definition} \label{diffuse definition}
	We call a Pythagorean pair $(A,B)$ acting on $\fH$ to be \textit{diffuse} if 
	$$\lim_{n \rightarrow \infty} p_n\xi=0$$ 
	for all $\xi\in\fH$ and all sequences $(p_n : n \geq 1)$ such that $p_n = x_n\dots x_2x_1$ where each $x_k \in \{A,B\}$. We say that $(p_n)$ is an increasing sequence of words in $A,B$. \\
	A Pythagorean representation is \textit{diffuse} if it is associated to a diffuse Pythagorean pair.
\end{definition}
In other words: $(A,B)$ is diffuse if any increasing chain of words in $A,B$ converges to 0 for the strong operator topology.

\begin{lemma} \label{diffuse lemma}
	Assume that $(A,B)$ is a diffuse Pythagorean pair of operators with associated representation $(\sigma,\scrH)$.
	If $\nu \in \Ver$ is a vertex different from the root and $z\in\scrH$ is a non-zero vector, then $\tau_\nu(z)\neq z$.
\end{lemma}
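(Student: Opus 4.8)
The plan is to argue by contradiction and to feed diffuseness through an iteration of $\tau_\nu$. Suppose $\tau_\nu(z)=z$ for some non-zero $z\in\scrH$ and some vertex $\nu\neq\emptyset$. For $k\geq 1$ let $\nu^{k}\in\Ver$ be the vertex whose binary sequence is the $k$-fold concatenation $bs(\nu)\cdots bs(\nu)$. The first step is the composition rule $\tau_\mu\circ\tau_\nu=\tau_{\nu\cdot\mu}$ for all vertices $\mu,\nu$: on the dense subspace $\scrK$ taking the $\nu$-component re-roots a decorated tree at $\nu$, and taking the $\mu$-component of the result lands at the vertex $\nu\cdot\mu$ of the original tree while applying exactly the same operators from $\{A,B\}$ along the way, because $\Phi$ is a monoidal functor; the identity then extends to $\scrH$ by continuity. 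In particular $\tau_\nu^{k}=\tau_{\nu^{k}}$, so applying $\tau_\nu$ repeatedly to $\tau_\nu(z)=z$ gives $\tau_{\nu^{k}}(z)=z$ for every $k\geq 1$.

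The heart of the proof is to show $\lim_{k\to\infty}\tau_{\nu^{k}}(z)=0$, which contradicts $\tau_{\nu^{k}}(z)=z\neq 0$. I would first take $z=[t,\xi]\in\scrK$. Let $u\in\cC$ be the infinite sequence $bs(\nu)bs(\nu)\cdots$; since $\Leaf(t)$ partitions $\cC$ there is a unique leaf $\ell$ of $t$ with $u\in I_\ell$, that is, $bs(\ell)$ is a prefix of $u$. Because $\nu\neq\emptyset$ we have $|\nu^{k}|\to\infty$, so for all large $k$ the word $bs(\ell)$ is a prefix of $bs(\nu^{k})$ and $\nu^{k}$ lies strictly below $\ell$; write $bs(\nu^{k})=bs(\ell)\cdot w_k$. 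Growing $t$ down until $\nu^{k}$ becomes a leaf and reading off its decoration shows that the $\nu^{k}$-component of $[t,\xi]$ is the single vector $p_{w_k}\,\xi_\ell\in\fH\subset\scrH$, where $p_{w_k}$ is the word in $A,B$ read along $w_k$ (a left edge contributing $A$, a right edge $B$). Since $w_{k+1}=w_k\cdot bs(\nu)$, the words $w_k$ are nested prefixes of a single infinite word, so the $p_{w_k}$ form a subsequence of an increasing sequence of words in $A,B$ applied to the fixed vector $\xi_\ell$; diffuseness thus yields $p_{w_k}\xi_\ell\to 0$ and hence $\tau_{\nu^{k}}(z)\to 0$.

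It remains to remove the hypothesis $z\in\scrK$. Given $\varepsilon>0$, pick $z'\in\scrK$ with $\|z-z'\|<\varepsilon$; as each $\tau_{\nu^{k}}$ is a partial isometry, and thus norm non-increasing, $\|\tau_{\nu^{k}}(z)\|\leq\varepsilon+\|\tau_{\nu^{k}}(z')\|$, and the last term tends to $0$ by the previous paragraph. Letting $\varepsilon\to 0$ gives $\tau_{\nu^{k}}(z)\to 0$, the desired contradiction. The main obstacle is the middle step: recognising the deep $\nu^{k}$-component of a fixed decorated tree as an increasing word in $A,B$ acting on a single decoration, which is exactly the point where diffuseness can intervene; by comparison the composition rule $\tau_\nu^{k}=\tau_{\nu^{k}}$ and the density argument are routine bookkeeping.
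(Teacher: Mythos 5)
Your argument is correct, but it is organised genuinely differently from the paper's proof, even though both hinge on the identical diffuseness computation. You prove that the iterates $\tau_{\nu}^{k}=\tau_{\nu^{k}}$ converge to zero in the strong operator topology: on $\scrK$ this is the observation that the $\nu^{k}$-component of $[t,\xi]$ is an increasing word in $A,B$ applied to the single decoration $\xi_\ell$ at the leaf $\ell$ whose sdi contains the ray $bs(\nu)bs(\nu)\cdots$, and the extension to $\scrH$ follows because the $\tau_{\nu^{k}}$ are uniformly bounded; the fixed-point equation $\tau_{\nu^{k}}(z)=z$ then forces $z=0$. The paper instead keeps the fixed vector $z$ and shows it is orthogonal to the dense subspace $\scrK$: from $\|\tau_{\nu^{n}}(z)\|=\|z\|$ it deduces that every component of $z$ at a vertex off the ray vanishes, and then approximates an arbitrary $[t,\xi]\in\scrK$ in norm by vectors whose decoration at the ray-leaf is zero, the approximation error being exactly the same increasing-word quantity that diffuseness kills, whence $\langle x,z\rangle=0$ for all $x\in\scrK$. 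Your route buys a cleaner, reusable statement (strong convergence of $\tau_{\nu^{k}}$ to zero for any non-root $\nu$, with the composition rule $\tau_\mu\circ\tau_\nu=\tau_{\nu\cdot\mu}$ made explicit), and it avoids any support analysis; the paper's route yields as a by-product the support picture (a $\tau_\nu$-fixed vector must be supported on the single ray through $\nu,\nu^{2},\dots$), which is precisely what motivates the terminology ``diffuse'' and the remark following the lemma. Note also that the composition rule you justify via functoriality and density is implicitly used by the paper as well, when it passes from $\tau_\nu(z)=z$ to the statement $\|\tau_{\nu^{n}}(z)\|=\|z\|$ for all $n$.
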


\begin{proof}
	Let $z \in \scrH$ and $\nu \in \Ver \backslash \{\varnothing\}$ such that $\tau_\nu(z) = z$. Denote $\nu^n$ to be the vertex corresponding to the binary sequence $\nu$ concatenated $n$ times.
	Define $p$ to be the unique ray which passes through the sequence of vertices $(\nu^n : n \geq 1)$ (by a ray we mean an infinite geodesic path in $t_\infty$ which begins from the root node). Note that $p$ is indeed unique because $\nu \neq \varnothing$.
	Observe that $\norm{z} = \norm{\tau_{\nu^n}(z)}$ for all $n \in \N^*$. This implies that $\tau_\omega(z) = 0$ if the vertex $\omega$ does not lie on the ray $p$.
	
	Now, consider an arbitrary vector $x := [t, \xi] \in \scrK$. We are going to show that $x$ is orthogonal to $z$ which will imply that $z=0$ since $\scrK$ is dense in $\scrH$.
	Iteratively define a sequence of representatives of $x$ by $(x^k : k \geq 0)$ such that $x^0 := (t^0, \xi^0) = (t, \xi)$ and:
	\begin{align*}
		x^{k+1} &:= (t^{k+1}, \xi^{k+1}) = (f\circ t^k, \Phi(f)\xi^k)
	\end{align*}
	where $f$ is the elementary forest having a single caret at the unique leaf $\ell_k$ of $t^k$ that lies in the ray $p$.
	Further, define $([\ti x^k] : k \geq 1) \subset \scrK$ by $\ti x^k = (t^k, \ti \xi^k)$ where:
	\begin{equation*}
		\ti \xi^k_\ell =
		\begin{cases*}
			\xi^k_\ell, \quad &$\ell \neq \ell_k,$\\
			0, \quad &$\ell = \ell_k$
		\end{cases*}
	\end{equation*}
	and $\xi_\ell^k$ denotes the component of $\xi^k$ at the leaf $\ell$.
	Hence, $\ti \xi^k$ is the tree $t^k$ with same decoration than $x^k$ except that the leaf $\ell_k$ is decorated by $0$.
	Define $p'$ to be the unique, infinite subpath of $p$ which begins from $\ell_0$ and let $(p'_n : n \geq 1)$ be the sequence of operators induced by $p'$. That is, $p'_n = a_n\dots a_2a_1$ such that $a_k = A$ if the $k$th edge of $p'$ is a left-edge, otherwise $a_k = B$. Using the initial assumption that $(A,B)$ is diffuse we have:
	\begin{align*}
		\xi^{k}_{\ell_k} & = p'_k\xi^0_{\ell_0} = p'_k\xi_{\ell_0} \rightarrow 0\ \textrm{as}\ k \rightarrow \infty.
	\end{align*}
	Hence, by choosing a sufficiently large $m$ we can make $\norm{x - [\ti x^m]} = \norm{\xi^m - \ti \xi^m} = \vert \xi^m_{\ell_m} \vert$ arbitrarily small. Therefore, $x$ is the norm limit of the sequence $([\ti x^n] : n \geq 1)$. Further, observe by construction that each $[\ti x^n]$ is orthogonal to the vector $z$. Therefore we have:
	\[\langle x, z \rangle = \lim_{n\to\infty} \langle [\ti x^n], z \rangle  = 0.\]
	Since $\scrK$ is a dense subspace $\scrH$ this implies $z \in \scrH^\perp$ and thus $z = 0$. The reverse direction of the lemma is obvious. This completes the proof. 
\end{proof}

\begin{remark}
	The above lemma shows that if $(\sigma,\scrH)$ is diffuse and $z\in\scrH$ is non-zero, then necessarily $\tau_\nu(z)\neq 0$ on a set of $\nu$ that is not contained in finitely many rays. Intuitively, $z$ is supported on a diffuse set of vertices of the infinite binary tree which motivates our terminology.
\end{remark}

\textbf{For the remainder of the paper we shall focus on diffuse Pythagorean pairs of operators.}

\subsection{Von Neumann ergodic theorem and consequences for partial isometries.}
We deduce an interesting consequence of the von Neumann ergodic theorem (\cite{neumann1932proof}, see \cite[Theorem II.11]{reed1972methods} for a more recent proof) that links our projections $\rho_\nu$ to the Jones action $\sigma$.

\begin{theorem} \label{MET} [von Neumann ergodic theorem]
	Let $H$ be a Hilbert space, $U$ a unitary operator acting on it, and $N:=\ker(U-\id_H)\subset H$ the vectors fixed by $U$.
	We have that 
	\[\frac{1}{n}\sum_{k=0}^{n-1}U^k \xrightarrow{s} \textrm{proj}_N\]
	where $\textrm{proj}_N$ is the orthogonal projection onto $N.$
\end{theorem}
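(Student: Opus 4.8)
The plan is to prove this by the classical orthogonal-decomposition argument, analysing the averaging operators $M_n:=\frac1n\sum_{k=0}^{n-1}U^k$ separately on $N$ and on $N^\perp$. The first thing I would record is that each $M_n$ is a contraction: since $U$ is unitary, $\norm{U^k}=1$ for every $k$, so $\norm{M_n}\le \frac1n\sum_{k=0}^{n-1}\norm{U^k}=1$. This uniform bound is what will later let me pass from a dense subset to its closure.

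The crux of the argument is the identification $N^\perp=\overline{\operatorname{ran}(U-\id_H)}$. To establish it I would exploit unitarity to show $\ker(U-\id_H)=\ker(U^*-\id_H)$: if $U\xi=\xi$ then applying $U^*$ gives $\xi=U^*\xi$, and conversely. Combining this with the general identity $\ker(T^*)=\overline{\operatorname{ran}(T)}^{\perp}$ applied to $T=U-\id_H$ (so that $T^*=U^*-\id_H$) yields
\[N=\ker(U-\id_H)=\ker(U^*-\id_H)=\overline{\operatorname{ran}(U-\id_H)}^{\perp},\]
which is exactly the orthogonal decomposition $H=N\oplus\overline{\operatorname{ran}(U-\id_H)}$ that I need.

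With the decomposition in hand the remaining steps are routine. On $N$ each $U^k$ acts as the identity, so $M_n\xi=\xi=\operatorname{proj}_N\xi$ exactly, with no limit required. On the dense subspace $\operatorname{ran}(U-\id_H)$ I would use the telescoping computation: for $\eta=(U-\id_H)\zeta$,
\[M_n\eta=\frac1n\sum_{k=0}^{n-1}\bigl(U^{k+1}\zeta-U^k\zeta\bigr)=\frac1n\bigl(U^n\zeta-\zeta\bigr),\]
whose norm is at most $2\norm{\zeta}/n\to 0$. To upgrade this to all of $\overline{\operatorname{ran}(U-\id_H)}$ I would invoke the uniform bound $\norm{M_n}\le1$: given $\eta$ in the closure and $\varepsilon>0$, choose $\eta'\in\operatorname{ran}(U-\id_H)$ with $\norm{\eta-\eta'}<\varepsilon$, so that $\norm{M_n\eta}\le\norm{M_n(\eta-\eta')}+\norm{M_n\eta'}<\varepsilon+\norm{M_n\eta'}$, and the second term tends to $0$. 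Finally, decomposing an arbitrary $\xi=\xi_N+\xi_{N^\perp}$ gives $M_n\xi=\xi_N+M_n\xi_{N^\perp}\to\operatorname{proj}_N\xi$, which is the claimed SOT convergence. I do not expect a genuine obstacle here since the result is classical; the only step demanding care is the decomposition in the second paragraph, where unitarity is essential to equate $\ker(U-\id_H)$ with $\ker(U^*-\id_H)$ and hence to identify $N^\perp$ with the closed range of $U-\id_H$.
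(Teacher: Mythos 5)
Your proof is correct and complete; the paper does not prove this statement itself but cites it as classical (von Neumann's original paper and Reed--Simon, Theorem II.11), and the argument given there is exactly yours: the decomposition $H=\ker(U-\id_H)\oplus\overline{\operatorname{ran}(U-\id_H)}$ via $\ker(U-\id_H)=\ker(U^*-\id_H)$, the telescoping computation on the range, and the uniform bound $\norm{M_n}\le 1$ to pass to the closure. Nothing further is needed.
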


Given a vertex $\nu$ recall that $\rho_\nu:=\tau_\nu^*\circ \tau_\nu$ is the associated projection as defined in Section \ref{partial isom section} and $I_{\nu}$ is the associated sdi. If $I$ is a finite disjoint union of sdi's $\{I_{\nu_i}\}_{i=1}^n$ (like the support of a $g\in F$), then define the projection: 
$$\rho_I := \sum_{i=1}^n \rho_{\nu_i}.$$
Here is a very useful consequence of the von Neumann ergodic theorem applied to Pythagorean representations.

\begin{proposition} \label{MET rho}
Let $(A,B)$ be a diffuse Pythagorean pair with associated Jones' representation $(\sigma,\scrH)$. 
Then for all $g \in F$ we have:
\begin{align*} \ker (\sigma(g) - \id) & = \{z \in \scrH : \tau_\nu(z) = 0 \text{ for all } \nu \in \Ver \text{ satisfying } I_\nu \subset \supp(g)\}\\
& = \bigcap_{\nu:\ I_\nu\subset \supp(g)} \ker(\tau_\nu)
\end{align*} 
and
$$\frac{1}{n}\sum_{k=0}^{n-1}\sigma(g^k) \xrightarrow{s} \id - \rho_{\supp(g)}.$$
\end{proposition}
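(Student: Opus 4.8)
The plan is to apply the von Neumann ergodic theorem (Theorem \ref{MET}) to the unitary $U=\sigma(g)$ and then identify its fixed-point space $N:=\ker(\sigma(g)-\id)$ with the range of the projection $\id-\rho_{\supp(g)}$. Since $\sigma(g)^k=\sigma(g^k)$, Theorem \ref{MET} immediately gives $\frac1n\sum_{k=0}^{n-1}\sigma(g^k)\xrightarrow{s}\mathrm{proj}_N$, so the whole statement reduces to the single identity $\mathrm{proj}_N=\id-\rho_{\supp(g)}$. The description of $N$ as $\bigcap_{\nu:\ I_\nu\subset\supp(g)}\ker(\tau_\nu)$ is then just unwinding the definition of $\rho_{\supp(g)}$, using that $I_\nu\subset\supp(g)$ forces $\nu$ to be a descendant of one of the finitely many components $\nu_i$ of $\supp(g)$, so $\rho_\nu\le\rho_{\nu_i}$. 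Writing $\supp(g)=\bigsqcup_i I_{\nu_i}$ with the $\nu_i$ pairwise disjoint, $\rho_{\supp(g)}=\sum_i\rho_{\nu_i}$ is a genuine orthogonal projection and $\id-\rho_{\supp(g)}$ projects onto $\{z:\tau_{\nu_i}(z)=0\text{ for all }i\}$. It therefore suffices to prove the two inclusions between this space and $N$.

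For the inclusion $\mathrm{range}(\id-\rho_{\supp(g)})\subset N$ (the easy direction) I would pick a representative $g=[t,s]$ whose leaf-partition refines the decomposition of $\supp(g)$ into sdi, and split the sum \eqref{action rearrange equation}, $\sigma(g)=\sum_j\tau_{a_j}^*\tau_{b_j}$, according to whether a leaf lies inside or outside $\supp(g)$. Outside the support $g$ acts as the identity, so those terms are exactly the projections $\rho_{a_j}$ and add up to $\id-\rho_{\supp(g)}$; the inside terms carry $b_j$ with $I_{b_j}\subset\supp(g)$, hence are killed by any $z$ with $\rho_{\supp(g)}z=0$. Thus $\sigma(g)z=(\id-\rho_{\supp(g)})z=z$.

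The converse inclusion $N\subset\mathrm{range}(\id-\rho_{\supp(g)})$ is the heart of the matter and is where diffuseness, through Lemma \ref{diffuse lemma}, enters. Given $z$ with $\sigma(g)z=z$ I must show $\tau_\nu(z)=0$ whenever $I_\nu\subset\supp(g)$. Since a sub-sdp $\{\lambda_j\}$ of $I_\nu$ satisfies $\sum_j\rho_{\lambda_j}=\rho_\nu$, and $\tau_\nu=\tau_\nu\rho_\nu$ because $\rho_\nu$ is the initial projection of $\tau_\nu$, it is enough to produce around every point $x\in I_\nu$ one sdi $I_\mu\ni x$ with $I_\mu\subset I_\nu$ and $\tau_\mu(z)=0$, and then cover the compact set $I_\nu$ by finitely many of these and refine to an sdp. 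The mechanism forcing $\tau_\mu(z)=0$ is the following: if some power satisfies $g^k(I_\mu)=I_{\mu'}$ with $\mu'=\mu\cdot v$ a proper descendant of $\mu$ (so $v\neq\varnothing$), then \eqref{equality vertices equations} applied to $\sigma(g^k)z=z$ gives $\tau_\mu(z)=\tau_{\mu'}(z)=\tau_v(\tau_\mu(z))$, using that taking the $\mu\cdot v$-component is taking the $v$-component of the $\mu$-component; Lemma \ref{diffuse lemma} then yields $\tau_\mu(z)=0$.

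The hard part will be the dynamical input that every $x\in\supp(g)$ admits such a neighbourhood. I would argue from the structure of $g$ as an increasing piecewise-linear map: each component $I_{\nu_i}$ is $g$-invariant, and inside it $g$ has finitely many fixed points, each either a two-sided (hyperbolic) fixed point, possibly non-dyadic, or a one-sided fixed point occurring at a breakpoint and hence dyadic. An sdi containing a two-sided attracting fixed point, or having a one-sided or endpoint attractor as a vertex (necessarily dyadic), is mapped strictly into itself by $g$ or by $g^{-1}$, which produces a contracting sdi $I_\beta$ with $\tau_\beta(z)=0$ by the mechanism above; repelling fixed points are handled the same way after passing to $g^{-1}$, using $\sigma(g^{-1})z=z$. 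For a point $x$ in a fixed-point-free zone, the forward or backward $g$-orbit of a small $I_\mu\ni x$ eventually lands inside such a contracting $I_\beta$, whence $\tau_\mu(z)=\tau_{g^k\mu}(z)=0$ because $\rho_{g^k\mu}\le\rho_\beta$. Care is needed to take $I_\mu$ fine enough that the relevant $g^k$ maps it affinely onto a single sdi so that \eqref{equality vertices equations} applies, which is arranged by refining below a tree-representative of $g^k$. Combining the two inclusions gives $\mathrm{proj}_N=\id-\rho_{\supp(g)}$ and hence both displayed assertions.
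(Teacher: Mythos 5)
Your proposal is correct in outline and in its key mechanism, but for the hard inclusion it takes a genuinely different route from the paper. Both arguments agree on the frame: von Neumann's theorem (Theorem \ref{MET}) reduces everything to the identity $\mathrm{proj}_{\ker(\sigma(g)-\id)}=\id-\rho_{\supp(g)}$, the easy inclusion follows from the decomposition identities \eqref{action decomposition}--\eqref{action rearrange equation}, and the vanishing of components is ultimately forced by the same trick: exhibit $w=\tau_\mu(z)$ and a nonempty word $v$ with $\tau_v(w)=w$, then invoke Lemma \ref{diffuse lemma}. Where you differ is in how this configuration is produced. The paper first uses \eqref{action decomposition} to reduce to the case where $g$ has \emph{full} support, and then argues purely combinatorially: writing $g=[t,s]$ with leaves $(\nu_i)$ and $(\omega_i)$, it takes the \emph{first} index $j$ with $\tau_{\omega_j}(z)\neq 0$; since all earlier components vanish, $\nu_j$ and $\omega_j$ cannot be disjoint, and full support rules out $\nu_j=\omega_j$, so (after possibly replacing $g$ by $g^{-1}$) one is a proper descendant of the other, and \eqref{equality vertices equations} gives the contradiction in a few lines, with no dynamics and no compactness. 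You instead keep $g$ as is and run a dynamical covering argument: classify the finitely many fixed points of $g$ in its support, use attraction (for $g$ or $g^{-1}$) to produce sdis mapped Cantor-affinely onto proper descendant sdis, push every other point into such a contracting region along its orbit, and conclude by compactness and refinement to an sdp of $I_\nu$. This is completable -- the fix you flag (choosing $I_\mu$ below a leaf of a tree-representative of the relevant power $g^k$, so that $g^k(I_\mu)$ is automatically an sdi and nested-or-disjointness upgrades strict containment to the descendant relation) is exactly what is needed, and it handles even the delicate case of non-dyadic hyperbolic fixed points, where the naive choice of $I_\mu$ would not have an sdi image. What each approach buys: the paper's first-index trick is far shorter and sidesteps the entire case analysis of PL dynamics (dyadic versus non-dyadic fixed points, one-sided versus two-sided, the order of quantifiers between $k$ and $I_\mu$); your argument is longer and its dynamical part is only sketched, but it is more local and more conceptual, making explicit how $\ker(\sigma(g)-\id)$ is governed by the fixed-point structure of $g$, which is informative in its own right.
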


\begin{proof}
	Consider $A,B,\scrH,\sigma$ as above and $g\in F$.
	It is easy to see from Equation \ref{action decomposition} that the following holds:
	\[\{z \in \scrH : \tau_\nu(z) = 0, \forall \nu \in \Ver : I_\nu \subset \supp(g)\} \subset \ker(\sigma(g) - \id).\]
	Thus, we only need to prove the reverse inclusion. That is, if $z \in \scrH$ and $\sigma(g)z = z$, then $\tau_\nu(z) = 0$ for all $\nu \in \Ver$ such that $I_\nu \subset \supp(g)$. Therefore, again from Equation \ref{action decomposition}, it is sufficient to only consider $g \in F$ with full support and prove that $\ker(\sigma(g) - \id) = \{0\}$. 
	Consider such an element $g := [t, s] \in F$ and assume there exists a non-zero vector $z$ fixed by $\sigma(g)$. 
	Let $(\nu_1,\dots,\nu_n)$ and $(\omega_1,\dots,\omega_n)$ be the leaves of $t$ and $s$, respectively.
	By Equation \ref{equality vertices equations} we have $\tau_{\nu_i}(z) = \tau_{\omega_i}(z)$ for all $1\leq i \leq n$.
	We conclude by using the assumption.
	
	Indeed, let $j$ be the first index satisfying $\tau_{\omega_j}(z)\neq 0$ and thus the first index satisfying $\tau_{\nu_j}(z)\neq 0$ as well. Such an index exists because $z\neq 0$.
	This implies $\nu_j$ and $\omega_j$ are not disjoint vertices (i.e.~their associated sdi have non-trivial intersection). Further, $\nu_j$ and $\omega_j$ do not coincide because $g$ has full support. Hence, up to switching the roles of $t$ and $s$ (which corresponds to taking the inverse of $g$) we can assume that $\nu_j$ is a child of $\omega_j$.
	Then the binary sequence of $\nu_j$ is obtained by concatenating $\omega_j$ with a non-trivial word $\mu$. This implies that $\tau_\mu(\tau_{\omega_j}(z)) = \tau_{\omega_j}(z)\neq 0$ contradicting Lemma \ref{diffuse lemma}.
	
	The second statement follows immediately from the first and Theorem \ref{MET} by noting that $\id-\rho_{\supp(g)}$ is the orthogonal projection onto the subspace $\ker(\id-\sigma(g)) \subset \scrH$.
\end{proof}

\begin{remark}
	The proof of Proposition \ref{MET rho} can be slightly modified to show that for all $g \in F$, there is no $z \in \scrH$ and $\lambda \in \bS$ with $\lambda \neq 1$ such that $\sigma(g)z = \lambda z$. In particular, this proves that if $g$ has full support then $\sigma(g)$ has no eigenvalues, otherwise $\sigma(g)$ only has the single eigenvalue $1$.
\end{remark}

%%%%%%%%%%%%END TOOLS FOR PYTH REP%%%%%%%%%%%%%%%%%%%%%%%
%%%%%%%%%%%%%%%%%%%%%%%%%%%%%%%%%%%%%%%%%%%%%%%

\section{Containment of Induced Representations} \label{induced chapter}
In the quest to find families of \textit{irreducible} representations of Thompson's groups, one obvious source are induced representations associated to self-commensurator subgroups of Thompson's groups (the irreducibility is a consequence of the Mackey-Shoda criteria \cite{Mack51}, see also \cite[Theorems 1.F11 and 1.F.17]{BH} and \cite{Burger-Harpe97}).
These representations are easily constructible without using Jones' technology and have somehow too obvious finite-dimensional roots. Hence, we are interested in obtaining representations far from those.
This leads to the following notion and question.

\begin{definition}
	Consider a unitary representation $\sigma$ of a discrete group $G$.
	We say that $\sigma$ is \NInd\ if given any non-trivial subgroup $K\subset G$ and any finite-dimensional non-zero unitary representation $\theta:K\act\fK$ we have that $\sigma$ does not contain the induced representation $\Ind_K^G\theta$.
\end{definition}

\begin{question} \label{contain monomial rep question}
	Let $\sigma$ be a diffuse Pythagorean representation. Do we have that $\sigma$ is \NInd?
\end{question}

Note that a mixing representation (e.g~the regular one $\la_F:F\act \ell^2(F)$) is always \NInd\ but for some rather trivial reason, see Remark \ref{obs:NInd-mixing}. Although, none of the Pythagorean representations is mixing (see the same remark) making our question far from being obvious.

As far as the authors are aware of, from previously constructed families of representations of Thompson's groups, only Garncarek has (partially) addressed the above question \cite{garncarek2012analogs}. Indeed, he constructed a family of representations of $F$ and showed that they are not the induction of a finite-dimensional representation of a {\it parabolic} subgroup (i.e.~the subgroups $F_p:=\{g\in F:\ g(p)=p\}$ for some ray $p$ in the Cantor space).

In this section we shall indeed answer positively to our question proving the main result of the paper. 
This extends and strengthens the result of Garncarek. 
Indeed, there is a large wealth of subgroups of $F$ which are not fixed point subgroups (or at least are not known to be of this form); to cite a few examples: the derived subgroup, the rectangular (finite index) subgroups of Bleak-Wassink \cite{bleak2007finite}, the oriented Jones-Thompson subgroup $\Vec F\subset F$ \cite{Jones17} and analogous diagrammatically constructed subgroups (providing infinite index maximal subgroups) \cite{ren2018skein, aiello2021}, as well as the maximal subgroups of \cite{golan2016generation, golan2017subgroups}.
Moreover, the representations of Garncarek are all Pythagorean (arising from one-dimensional $\fH$), see Section \ref{sec:example}.
Finally, as far as we are aware of, this result produces the first known representations of $F$ that are \NInd\ but not mixing, see Remark \ref{obs:NInd-mixing}.

\begin{theorem} \label{non-induced rep theorem}
	All diffuse Pythagorean representations are \NInd.
\end{theorem}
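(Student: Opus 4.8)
The plan is to argue by contradiction: suppose $\Ind_H^F\theta\subset\sigma$ for some non-trivial subgroup $H\subset F$ and some non-zero finite-dimensional $\theta:H\to\cU(\fK)$, and deduce that $\sigma\restriction_H$ is weakly mixing, which is absurd. First I would reformulate the hypothesis into a statement about a single matrix coefficient. Realising $\Ind_H^F\theta$ on $\ell^2(F/H,\fK)$ and taking the vector $\delta_{eH}\otimes v$ with $v\in\fK$ a unit vector, its diagonal matrix coefficient vanishes off $H$. Pulling this back through the isometric intertwiner $\Ind_H^F\theta\hookrightarrow\sigma$, I obtain a non-zero $\zeta\in\scrH$ with $\langle\sigma(g)\zeta,\zeta\rangle=0$ for every $g\in F\setminus H$, and such that $\overline{\mathrm{span}}\{\sigma(h)\zeta:h\in H\}$ (the image of $\delta_{eH}\otimes\fK$) is finite-dimensional and $H$-invariant. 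It therefore suffices to show that $\sigma\restriction_H$ admits no non-zero finite-dimensional subrepresentation, i.e.\ that it is weakly mixing in the sense of Proposition~\ref{prop:unitary-rep}.

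Next I would locate the ``support'' of $\zeta$ using Proposition~\ref{MET rho}. Set $\mathscr{P}:=\{g\in F: g^n\notin H \text{ for all } n\geq 1\}$. For $g\in\mathscr{P}$ every power $g^n$ ($n\geq1$) lies off $H$, so $\langle\sigma(g^n)\zeta,\zeta\rangle=0$ and the Ces\`aro averages $\tfrac1n\sum_{k=0}^{n-1}\langle\sigma(g^k)\zeta,\zeta\rangle=\|\zeta\|^2/n$ tend to $0$; by Proposition~\ref{MET rho} this limit equals $\langle(\id-\rho_{\supp(g)})\zeta,\zeta\rangle$, whence $\rho_{\supp(g)}\zeta=\zeta$. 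In other words, for every $g\in\mathscr{P}$ the vector $\zeta$ is supported on $\supp(g)$, i.e.\ $\tau_\nu(\zeta)=0$ whenever $I_\nu\cap\supp(g)=\varnothing$. Defining $Q\subset\cC$ to be the support of $\zeta$ (the set of $u$ with $\tau_\nu(\zeta)\neq0$ for every sdi $I_\nu\ni u$), which is non-empty since $\zeta\neq0$, a short contrapositive argument applied to the powers $g^k$ (using $\supp(g^k)\subset\supp(g)$) shows: if $Q\not\subset\supp(g)$, then $g^m\in H$ for infinitely many $m\geq1$. This is the bridge that will later let me convert scrambling elements of $F$ into scrambling elements of $H$.

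The core of the proof is then the construction of scrambling elements. Fix $u\in Q$, a finite set $K\subset\scrH$ and $\epsilon>0$; after approximating I may assume the vectors of $K$ are of the form $[t,\bar\xi]$ for a common tree $t$, and that $u$ lies in some leaf $\ell_u$ of $t$. Using diffuseness (Definition~\ref{diffuse definition}) I choose a deep sdi $I_{\nu_0}\subset I_{\ell_u}$ containing $u$, far enough down the ray through $u$ that the residual component $w\bar\xi_{\ell_u}$ of each vector on $I_{\nu_0}$ (where $w$ is the increasing word in $A,B$ along the path from $\ell_u$ to $\nu_0$) has norm $<\epsilon$. I then take $g\in F$ fixing $I_{\nu_0}$ pointwise and acting on its complement with north--south dynamics attracting towards a single ray $p$. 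The part of each $\xi$ living on $I_{\nu_0}$ is fixed by all $\sigma(g^m)$ and contributes at most $\epsilon^2$, while the part living off $I_{\nu_0}$ is pushed by $g^m$ into sdi of depth tending to infinity near $p$; pairing it against the fixed $\xi$ forces increasing words in $A,B$ to act, so diffuseness sends this contribution to $0$. Hence $\langle\sigma(g^m)\xi,\xi\rangle\to\|w\bar\xi_{\ell_u}\|^2<\epsilon^2$ as $m\to\infty$, uniformly over the finite set $K$. Since $u\notin\supp(g)$, the previous paragraph gives $g^m\in H$ for infinitely many $m$; choosing one such $m$ large enough yields $h:=g^m\in H$ with $|\langle\sigma(h)\xi,\xi\rangle|<\epsilon$ for all $\xi\in K$.

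By Proposition~\ref{prop:unitary-rep} this shows $\sigma\restriction_H$ is weakly mixing, contradicting the finite-dimensional $H$-invariant subspace of the first step; hence $\sigma$ is \NInd. I expect the genuine difficulty to lie in the scrambling construction, because of a real tension there: to guarantee (via $Q$) that a power of the scrambling element returns into $H$, that element must fix a neighbourhood of $u$, yet such an element cannot move any vector concentrated near $u$. The resolution is to let diffuseness shrink this fixed neighbourhood arbitrarily close to $u$: diffuseness is invoked once to make the unmovable residual of each $\xi\in K$ near $u$ small, and again to dissipate the mass that $g^m$ pushes towards the attracting ray. Making these two estimates hold simultaneously and uniformly over the finite set $K$, while keeping the construction compatible with the requirement that infinitely many powers land in $H$, is the crux; by comparison the reformulation and the ergodic-theoretic steps are routine given Proposition~\ref{MET rho}.
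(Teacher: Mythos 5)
Your proposal follows the same skeleton as the paper's proof: the contradiction setup, a vector $\zeta$ whose matrix coefficient vanishes off $H$, the use of Proposition \ref{MET rho} to define $\mathscr{P}$ and a non-empty set $Q$ with the bridge property ``$Q\not\subset\supp(g)$ forces powers of $g$ into $H$'', the choice of $u\in Q$, scrambling elements that fix a neighbourhood of $u$, and the final weak-mixing contradiction via Proposition \ref{prop:unitary-rep}. (Your variants are fine: defining $Q$ as the support of the single vector $\zeta$ works by a compactness argument, and your contrapositive for the bridge via $\supp(g^k)\subset\supp(g)$ is correct.) Where you genuinely diverge is the scrambling step. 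The paper builds explicit elements $g_{n,i}$ from the vines $L_i,R_i$, proves the power identities $[L_i,R_i]^n=[L_{in},R_{in}]$ and $(g_{n,i})^j=g_{n,ij}$ (Claims 3--5) so that the elements returned by the bridge stay in the same explicit family, and proves the decay $\lim_i\langle\sigma([L_i,R_i])\xi,\xi\rangle=0$ (Claim 6) by rewriting the pairing via powers of the bilateral shift and invoking their weak convergence to zero. You instead take a single element $g$ with north--south-type dynamics and estimate $\langle\sigma(g^m)\xi,\xi\rangle$ directly as $m\to\infty$; since the bridge supplies arbitrarily large powers of $g$ inside $H$, no power identities are needed. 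This is a real simplification of the bookkeeping --- and indeed the paper's elements are of exactly this type, as $g_{n,i}=(g_{n,1})^i$ --- but it shifts all the weight onto the analytic estimate.

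That estimate, as you state it, has a gap. A minor point first: an element of $F$ fixing $I_{\nu_0}$ pointwise also fixes $0^\infty$ and $1^\infty$ and preserves each complementary piece, so it cannot attract towards a single ray; you need an attracting and a repelling ray in each complementary interval (harmless, but the argument must be run for finitely many rays). The substantive point is your claim that ``the part living off $I_{\nu_0}$ is pushed by $g^m$ into sdi of depth tending to infinity''. This is false near the repelling rays: $g^m$ maps the sdi at depth roughly $m$ below a repelling ray onto a sdi of \emph{bounded} depth, so that portion of $\sigma(g^m)\xi_{out}$ never becomes deep. Its contribution is still negligible, but for a different reason: the component of the fixed vector $\xi_{out}$ on such a deep preimage is an increasing word in $A,B$ applied to a leaf vector, hence small by diffuseness. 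Concretely, writing $\sigma(g^m)=\sum_i\tau^*_{\nu_i}\tau_{\omega_i}$, a term-by-term bound cannot work because the number of terms grows linearly in $m$; one must split the indices into those with $I_{\nu_i}$ in a depth-$D$ neighbourhood of the repelling rays and the rest (whose images $I_{\omega_i}$ are then uniformly deep near the attracting rays for large $m$), and apply Cauchy--Schwarz to each group, getting a bound of the form $\norm{\rho_{R_D}\xi_{out}}\,\norm{\xi_{out}}+\norm{\rho_{A_m}\xi_{out}}\,\norm{\xi_{out}}$ with both projections killed by diffuseness. So three applications of diffuseness are needed (residual at $u$, mass near the attracting rays, mass near the repelling rays), whereas your closing paragraph budgets only two; the third is precisely the difficulty that the paper's shift-operator argument in Claim 6 absorbs (its boundary terms $\langle A^{i+1}\xi-BA^{i+1}\xi,A\xi\rangle$ and $\langle B\xi,B^{i+1}\xi-AB^{i+1}\xi\rangle$ are your repelling/attracting-ray pieces). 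With this repair your argument closes and is correct.
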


\begin{proof}
	Consider a diffuse Pythagorean pair $(A,B)$ acting on $\fH$ and write $(\sigma,\scrH)$ for the associated Pythagorean representation.
	Suppose there exists a non-trivial subgroup $H \subset F$ and a non-zero finite-dimensional representation $\theta:H\act\fK$ of $H$ such that $\sigma \supset \Ind_H^F \theta$. 
	Up to identification, we may assume that $\fK$ is a subspace of $\scrH$. 
	Moreover, note that $\sigma(g)(\fK)\perp \fK$ for all $g\notin H$ implying that 
	\[\phi_z(g) := \langle \sigma(g)z, z \rangle = 0 \text{ for all } g\in F\backslash H \text{ and } z\in \fK.\]
		
	Our general strategy is to prove that $\sigma\restriction_H$ is weakly mixing by proving that $H$ contains an infinite subset $H'$ with good asymptotic properties. This will lead to a contradiction since a weakly mixing representation does not contain any non-trivial finite-dimensional subrepresentation.
		
	{\bf Notation.} From now on we fix a unit vector $z\in\fK$. 
	
	\textbf{(1)} First, we shall prove the following claim.
	
	\textbf{Claim 1:} If there exists $g \in F$ such that $\{g^n\}_{n \in \N^*} \subset F \backslash H$, then $\rho_N(z) = 0$ where 
	$$N := \supp(g)^c=\cC\setminus \supp(g)$$
	and recall $\cC$ denotes the Cantor space.
	Equivalently, $\fK$ is in the range of $\rho_{\supp(g)}$ for all $g\in F$ satisfying that $g^n\notin H$ for all $n\geq 1.$
	
	Suppose $g \in F$ satisfying that $g^n\notin H$ for all $n\geq 1$.
	Proposition \ref{MET rho} implies that:
	\begin{align*}
		\norm{\rho_N(z)}^2 &= \langle \rho_N(z), \rho_N(z) \rangle  = \langle \rho_N(z) , z\rangle\\
		& = \biggl \langle \lim_{n \rightarrow \infty} \frac{1}{n}\sum_{k=0}^{n-1}\sigma(g^k)z, z \biggr \rangle 
		= \lim_{n \rightarrow \infty} \frac{1}{n}\sum_{k=0}^{n-1} \langle \sigma(g^k)z, z \rangle \\
		&= \lim_{n\to\infty} \frac{1}{n} \sum_{k=0}^{n-1}\phi_z(g^k) =
		\lim_{n \rightarrow \infty} \frac{1}{n} \phi_z(g^0)%\norm{z}^2 
		= 0.
	\end{align*}
	This proves the claim.
	
	Motivated by the result of the above claim, define:
	\begin{align*}
		\mathscr{P} &:= \{g \in F: \{g^n\}_{n \in \N^*} \subset F \backslash H\}, \\
		P &:= \bigcup_{g \in \mathscr{P}} \supp(g)^c = \biggl (\,\bigcap_{g \in \mathscr{P}} \supp(g) \biggr )^c,\\
		Q &:=  P^c = \bigcap_{g \in \mathscr{P}}\supp(g).
	\end{align*}
	
	In particular, we have that $\fK\subset \bigcap_{g\in\mathscr P} \Ran(\rho_{\supp(g)}).$
	
	\textbf{Claim 2:} The set $Q$ is non-empty. Further, if $g\in F$ satisfies $Q\not\subset \supp(g)$, then there exists $n \geq 1$ for which $g^n \in H$.
	
	Using the above notation and from Claim $1$, $\rho_P(z) = 0$. If $P$ is the whole Cantor space, then $z = 0$, a contradiction. Hence, $P$ must be a proper subset and subsequently $Q$ must be non-empty. 
	Now suppose $g$ is an element in $F$ such that $Q\not\subset \supp(g)$. Then it follows $\supp(g)^c \nsubset P$ and thus $g \notin \mathscr{P}$. Hence, by definition of $\mathscr{P}$, there exists $n \geq 1$ such that $g^n \in H$ which proves the claim.
	
	The results from Claim $2$ provides a method for constructing elements in $H$ which we shall exploit in the following claims in the proof.  
	
	Define the following trees:
	\begin{center}
		\begin{tikzpicture}[baseline=0cm, scale = .8]
			\draw (0,0)--(-.5, -.5);
			\draw (0,0)--(.5, -.5);
			\draw (.5, -.5)--(0, -1);
			\draw (.5, -.5)--(1, -1);
			
			\node at (-1.3, -.5) {$L_1 = $};
			\node at (1.1, -.5) {,};		
		\end{tikzpicture}%
		\hspace*{.5em}%
		\begin{tikzpicture}[baseline=0cm, scale = .8]
			\draw (0,0)--(-.5, -.5);
			\draw (0,0)--(.5, -.5);
			\draw (-.5, -.5)--(-1, -1);
			\draw (-.5, -.5)--(0, -1);
			
			\node at (-1.8, -.5) {$R_1 = $};
		\end{tikzpicture}%
	\end{center}
	
	Then define the set of trees $\{L_i\}_{i \in \N^*}$ such that $L_{i} := f_{\tar(L_{i-1})}L_{i-1} = f_{i+1}L_{i-1}$ for $i \geq 2$ (recall $f_k$ denotes an elementary forest with a single caret at the $k$th root) and define another set of trees $\{R_i\}_{i \in \N^*}$ such that $R_{i} := f_1R_{i-1}$ for $i \geq 2$. 
	These trees are sometime called {\it vines}.
	Note that all the leaves of $L_i$ (resp.~$R_i$) are incident with left (resp.~right) edges except for the last (resp.~first) leaf. Moreover, the length of the $j$th leaf is an increasing function for $L_i$ and a decreasing function for $R_i$. This will play a key role in Claim 6.
	Further, the following identities can be easily verified:
	\[(\vert \otimes \vert \otimes L_{k-1})L_1 = L_{k+1} = f_{k+2}L_k, \quad (R_{k-1} \otimes \vert \otimes \vert)R_1 = R_{k+1} = f_1R_k\]
	for $k \geq 2$. As an example, the trees $L_2, R_2, L_3$ and $R_3$ are shown below:
	
	\begin{center}
		\begin{tikzpicture}[baseline=0cm, scale = .8]
			\draw (0,0)--(-.5, -.5);
			\draw (0,0)--(.5, -.5);
			\draw (.5, -.5)--(0, -1);
			\draw (.5, -.5)--(1, -1);
			\draw (1, -1)--(.5, -1.5);
			\draw (1, -1)--(1.5, -1.5);
			
			\node at (-1.3, -.8) {$L_2 = $};
			\node at (1.7, -.8) {,};		
		\end{tikzpicture}%
		\hspace*{.5em}%
		\begin{tikzpicture}[baseline=0cm, scale = .8]
			\draw (0,0)--(-.5, -.5);
			\draw (0,0)--(.5, -.5);
			\draw (-.5, -.5)--(-1, -1);
			\draw (-.5, -.5)--(0, -1);
			\draw (-1, -1)--(-1.5, -1.5);
			\draw (-1, -1)--(-.5, -1.5);
			
			\node at (-2.3, -.8) {$R_2 = $};
			\node at (.7, -.8) {,};		
		\end{tikzpicture}%
		\hspace*{.5em}%
		\begin{tikzpicture}[baseline=0cm, scale = .8]
			\draw (0,0)--(-.5, -.5);
			\draw (0,0)--(.5, -.5);
			\draw (.5, -.5)--(0, -1);
			\draw (.5, -.5)--(1, -1);
			\draw (1, -1)--(.5, -1.5);
			\draw (1, -1)--(1.5, -1.5);
			\draw (1.5, -1.5)--(1, -2);
			\draw (1.5, -1.5)--(2, -2);
			
			\node at (-1.3, -.8) {$L_3 = $};
			\node at (2.2, -.8) {,};		
		\end{tikzpicture}%
		\hspace*{.5em}%
		\begin{tikzpicture}[baseline=0cm, scale = .8]
			\draw (0,0)--(-.5, -.5);
			\draw (0,0)--(.5, -.5);
			\draw (-.5, -.5)--(-1, -1);
			\draw (-.5, -.5)--(0, -1);
			\draw (-1, -1)--(-1.5, -1.5);
			\draw (-1, -1)--(-.5, -1.5);
			\draw (-1.5, -1.5)--(-2, -2);
			\draw (-1.5, -1.5)--(-1, -2);
			
			\node at (-2.8, -.8) {$R_3 = $};
			\node at (.7, -.8) {.};		
		\end{tikzpicture}%
	\end{center}

	\textbf{Claim 3:} We have $[L_i, R_i]^n = [L_{in}, R_{in}]$ for all $i, n \in \N^*$ where $[L_i,R_i]^n$ is the $n$th power of the element of $F$ associated to the tree-diagram $(L_i,R_i).$
	
	We shall first prove the claim for the case when $i=1$ by induction on $n$. The initial cases for $n=1, 2$ are easily verifiable. Now suppose that $[L_1, R_1]^k = [L_k, R_k]$ for some $k \geq 2$. Then: 
	\begin{align*}
		[L_1, R_1]^{k+1} &= [L_k, R_k]\cdot [L_1, R_1] \\
		&= [f_{k+2}L_k, f_{k+2}R_k] \cdot [(R_{k-1} \otimes \vert \otimes \vert)L_1, (R_{k-1} \otimes \vert \otimes \vert)R_1] \\
		&= [L_{k+1}, f_{k+2}R_k] \cdot [f_{k+2}R_k, R_{k+1}] \\
		&= [L_{k+1}, R_{k+1}].
	\end{align*}
	This proves the statement for $i = 1$. The general case then easily follows:
	\[[L_i, R_i]^n = ([L_1,R_1]^i)^n =  [L_1, R_1]^{in} = [L_{in}, R_{in}].\] 
	
	{\bf Notation.} From now on we fix a binary sequence $u\in Q$. Such a sequence exists from Claim $2$.
	
	Recall that $t_n$ denotes the regular tree with $2^n$ leaves all with length $n$. 
	Fix a $n \in \N^*$ and denote $\nu_n$ to be the leaf of $t_n$ which corresponds to the sdi in $\cC$ that contains the sequence $u$. 
	Define the tree $p_{n,i}$ (resp.~$q_{n, i}$) by attaching copies of $L_i$ (resp.~$R_i$) to all leaves of $t_n$ except to $\nu_n$.
	Finally, define $g_{n, i} := [p_{n, i}, q_{n, i}] \in F$. 
	The purpose of the above construction is to ensure $u$ is not in the support of $g_{n,i}$ and thus $g_{n,i}$ acts like the identity on an open set containing $u$. Equivalently, 
	$$g_{n,i}\in \widehat{F_u}:=\{g\in F:\ g(u)=u, g'(u)=1\}.$$ 
	
	\textbf{Claim 4:} We have $(g_{n, i})^j = g_{n, ij}$ for all $n,i, j \in \N^*$.
	
	Fix $n \in \N^*$. Then the proof proceeds similarly as to the proof for Claim $3$ by applying the same argument separately for each of the leaves of $t_n$ except for the leaf $\nu_n$.

	\textbf{Claim 5:} There exists a \textit{subset} (not necessarily a subgroup) $H' \subset H$ such that $H' = \{g_{n, i_{n,k}} : n, k \in \N^*\}$ where for each fixed $n$ we have that the family $(i_{n, k} : k \geq 1)$ forms a strictly increasing sequence.
	
	From Claim $4$, it follows $\supp(g_{n, i}) = \supp(g_{n, j})$ for all $i, j \in \N^*$. Then by construction $u \in \supp(g_{n, i})^c$. 
	Thus $Q\not\subset\supp(g_{n,i})$ for all $n, i \in \N^*$. Now fix $n \in \N^*$ and consider $g_{n, 1} \in F$. By Claim $2$ there exists $i_1 \in \N^*$ such that $(g_{n, 1})^{i_1} = g_{n, i_1} \in H$. Set $i_{n,1} := i_1$. Similarly consider $g_{n, i_1+1}$. Again by Claim $2$ there exists $i_2 \in \N^*$ such that $(g_{n, i_1+1})^{i_2} = g_{n, (i_1+1)i_2} \in H$ and set $i_{n, 2} := (i_1+1)i_2$. Therefore, by iteratively applying this process, we obtain the set $H_n' := \{g_{n, i_{n,k}} : k \in \N^*\} \subset H$ where $(i_{n, k} : k \geq 1)$ forms a strictly increasing sequence. Repeating the above for all $n \in \N^*$ and taking $H' := \cup_{n \in \N^*}H_n' \subset H$ gives the required set which proves the claim. 
	
	\textbf{(2)} From the above claims, we have shown that $H$ contains infinitely many elements in the form $g_{n,i}$. 
	In fact, given any pair $(n,j)$ of non-zero natural numbers, there exists $i\geq j$ satisfying that $g_{n,i} \in H$. 
	Using just these elements we shall prove that the restriction of $\sigma$ on $H$ is weakly mixing. First we shall require the following claim.
	
	\textbf{Claim 6:} 
	We have that $$\lim_{i\to\infty} \langle \sigma( [L_i,R_i])\xi,\xi\rangle=0 \text{ for all } \xi\in \fH\subset \scrH.$$
	Fix $i \geq 1$ and consider $[L_i, R_i] \in F$. We identify $\xi\in\fH$ with its image $(\xi,e)$ inside $\scrH$ where $e$ stands for the trivial tree.
	Observe that $$\langle \sigma([L_i, R_i])\xi, \xi \rangle = \langle \Phi(R_i)\xi, \Phi(L_i)\xi \rangle.$$
	Moreover, 
	$$\Phi(L_i)\xi = (A\xi,AB\xi, AB^2\xi,\cdots, AB^i\xi, B^{i+1}\xi)$$
	and 
	$$\Phi(R_i)\xi = (A^{i+1}\xi, BA^i\xi, BA^{i-1}\xi,\cdots, BA\xi, B\xi).$$
	These operators are isometries from $\fH$ to $\fH\ot \ell^2(\{0,1,\cdots,i+1\})$.
	Furthermore, since $(A,B)$ is a diffuse pair we have that 
	$$\lim_iB^i\xi=\lim_iA^i\xi=0 \text{ for all } \xi\in\fH.$$ 
	This second fact allows us to forget asymptotically the right-most term of $\Phi(L_i)\xi$ and the left-most term of $\Phi(R_i)\xi$ which is crucial.
	By embedding $\ell^2(\{0,1,\cdots,i+1\})$ inside $\ell^2(\Z)$ we deduce that these two sequences of operators converge in the strong operator topology to the following isometries:
	$$L_\infty:\fH\to \fH\ot\ell^2(\Z),\ \xi\mapsto \sum_{j=0}^\infty AB^j\xi\ot\delta_j$$
	and
	$$R_\infty:\fH\to \fH\ot\ell^2(\Z),\ \xi\mapsto \sum_{j=0}^\infty BA^{j}\xi\ot\delta_{-j}.$$
	Note that we put a minus sign in the definition of $R_\infty.$ This will make our formula nicer.
	Define now the usual shift operator $S$ that we tensor by the identity:
	$$\id_{\fH}\ot S:\fH\ot\ell^2(\Z)\to \fH\ot\ell^2(\Z),\ \xi\ot\delta_n\mapsto \xi\ot\delta_{n+1}.$$
	Observe that 
	\begin{align*}
		\sigma( [L_i,R_i])\xi,\xi\rangle  = & \langle \Phi(R_i)\xi, \Phi(L_i)\xi \rangle \\
		= & \langle A^{i+1}\xi - BA^{i+1}\xi,A\xi\rangle + \langle B\xi, B^{i+1}\xi - AB^{i+1}\xi\rangle\\
		& + \langle (\id_\fH\ot S^{i+1})\circ R_\infty\xi, L_\infty\xi\rangle
	\end{align*}
	for all $i\geq 2$ and $\xi\in\fH.$
	The first two terms tend to zero in $i$ by the Cauchy-Schwarz inequality and because $A^n, B^n \xrightarrow{s} 0$ as a consequence of $(A,B)$ being diffuse.
	It is well-known and easy to prove that powers of the shift operators tends to zero in the \textit{weak} operator topology.
	Therefore, so does the sequence $(\id_\fH\ot S^{i} : i\geq 0)$ and thus the inner product of above converges to zero as $i\to\infty$ for all $\xi\in\fH$.

	\textbf{Claim 7:} The restriction $\sigma\restriction_H$ of $\sigma$ to $H$ is weakly mixing. 
	
	We will use the characterisation given by Proposition \ref{prop:unitary-rep}. 
	Fix a finite subset of vectors $K = \{x_j\}_{i = j}^k\subset\scrH$ and $\epsilon > 0$. 
	By density we can assume that $K\subset \scrK$.
	Further, for a fixed $n\geq 1$ large enough we can assume that for all $1\leq j\leq k$ we have each $x_j$ can be written in the form $[t_n, \xi^{(j,n)}]$ (where $t_n$ is the complete binary tree with $2^n$ leaves all of length $n$ and $\xi^{(j,n)}$ is a vector of $\fH^{\Leaf(t_n)}$).
	Consider one element $x_j=[t_m, \xi] \in K$ where $m \geq n$ (where we have dropped the superscript for lighter notations). Let $\nu_m$ be the label of the leaf of $t_m$ whose sdi contains $u\in Q$ as defined before Claim $4$. 
	Note for a sdp $\{\nu_i\}_{i \in I}$ we have the following identity:
	\begin{equation} \label{tau composition}
		\tau_{\nu_k}\tau^*_{\nu_l} = 
		\begin{cases}
			0,  & k \neq l \\
			\id_\scrH, & k = l
		\end{cases}
	\end{equation}
	where $\id_\scrH$ is the identity operator on $\scrH$. Then by using the notation from Equation \ref{action decomposition} and applying Equation \ref{tau composition}, we have for all $i \in \N^*$:
	\begin{align*} \label{inner product}
		\vert \langle \sigma(g_{m,i})x_j, x_j \rangle \vert &= \vert \langle \sum_{\nu \in \Leaf(t_m)} \tau^*_\nu(\sigma(\ti g_{m,i,\nu})(\tau_\nu(x_j))),x_j \rangle \vert \\
		&= \vert \langle \sum_{\nu \in \Leaf(t_m)} \tau^*_\nu(\sigma(\ti g_{m,i,\nu})(\tau_\nu(x_j))), \sum_{\omega \in \Leaf(t_m)} \tau^*_\omega(\tau_\omega(x_j)) \rangle \vert \\ 
		&= \vert \sum_{\nu \in \Leaf(t_m)} \langle \tau^*_\nu(\sigma(\ti g_{m,i,\nu})(\tau_\nu(x_j))), \tau^*_\nu(\tau_\nu(x_j)) \rangle \vert \\
		&= \vert \sum_{\nu \in \Leaf(t_m)} \langle \sigma(\ti g_{m,i,\nu})(\tau_\nu(x_j)), \tau_\nu(x_j) \rangle \vert \\
		&= \vert \sum_{\nu \neq \nu_m} \langle \sigma([L_i, R_i])[e, \xi_\nu], [e, \xi_\nu] \rangle + \norm{\xi_{\nu_m}}^2\\ 		&\leq  \sum_{\nu \neq \nu_m} \vert \langle \sigma([L_i, R_i])\xi_\nu, \xi_\nu \rangle \vert + \norm{\xi_{\nu_m}}^2.
	\end{align*}
	
	We will now show that this expression tends to zero in $m$ using the property that $(A,B)$ is diffuse.
	To keep track of all the indices $i,m,j$ we now write $\xi_\nu^{(j,m)}$ for $\xi_\nu$ if the context is unclear.
	
	For each $x_j \in K$, let $m_j \in \N$ be such that $\norm{\xi_{\nu_m}^{(j,m)}}^2< \epsilon/2$ for all $m \geq m_j$.
	Such a $m_j$ exists since $(A,B)$ is diffuse.
	Indeed, $\xi_{\nu_m}^{(j,m)}$ is a coefficient of $\xi^{(j,m)}$ where $(t_m,\xi^{(j,m)})$ is a representative of $x_j$. 
	Taking larger $m'\geq m$, we obtain that the coefficients of $\xi^{(j,m')}$ are obtained from those of $\xi^{(j,m)}$ to which we apply an increasing sequence of words in $A,B$.
	Since an increasing sequence of words in $A,B$ tends in the strong operator topology to zero we deduce that the coefficients of $\xi^{(j,m)}$ tends to zero in $m$.
	Consider now $M=\max\{m_j:\ 1\leq j\leq k)$ and fix $m\geq M$.
	We have that 
	$$\vert \langle \sigma(g_{m,i})x_j, x_j \rangle \vert \leq \epsilon/2 + \sum_{\nu \neq \nu_m} \vert \langle \sigma([L_i, R_i])\xi_\nu^{(j,m)}, \xi_\nu^{(j,m)} \rangle \vert \text{ for all } 1\leq j\leq k.$$
	Since $m$ is fixed it is easy to conclude.
	Indeed, apply Claim 6 to each vector $\xi_\nu^{(j,m)}$ with $1\leq j\leq k$ and $\nu$ leaf of $t_m$ different from $\nu_m$.
	We deduce that for $i$ large enough the quantity above is smaller than $\epsilon$ proving the claim.
	
	Therefore, we have shown the action of $H$ is weakly mixing and hence there are no non-zero finite-dimensional, $H$-invariant subspaces of $\scrH$, a contradiction.
\end{proof}

\begin{remark}\label{obs:NInd-mixing}
	\begin{enumerate}
		\item If $(\sigma,\scrH)$ is a Pythagorean representation, then it is not mixing.
		Indeed, let $H\subset F$ be the subgroup of $g\in F$ fixing the interval $[0,1/2]$. Consider now the subspace $\fK\subset\scrH$ equal to the range of the isometry $\tau_0^*$. It is not hard to see that $H$ acts trivially on $\fK$. Since $H$ is infinite this implies that $\sigma$ is not mixing. 
		\item
		It is clear that mixing implies \NInd\ for torsion-free groups. 
		Indeed, consider a mixing representation $\sigma$ of a torsion-free group $G$. 
		If $H\subset G$ is a non-trivial subgroup it must be an infinite torsion-free group and then the restriction $\sigma\restriction_H$ is weakly mixing. 
		Therefore, $\sigma\restriction_H$ does not admit any finite-dimensional subrepresentation. 
		Hence, $\sigma$ does not contain any $\Ind_H^G\theta$ with $\theta:H\act\fK$ finite-dimensional.
		\item
		By definition we trivially have that a \NInd\ representation is weakly mixing.
		Therefore, we have the chain of implications:
		\begin{center}
			mixing $\Rightarrow$ \NInd\ $\Rightarrow$ weakly mixing
		\end{center}
		for representations of torsion-free groups such as $F$.
		Moreover, we have produced a huge class of \NInd\ representations of $F$ that are not mixing. 
	\end{enumerate}
\end{remark}

With little effort, we can adjust the proof of Theorem \ref{non-induced rep theorem} to obtain the following result.
Note that the condition $A^n, B^n \xrightarrow{s} 0$ of below is much weaker than having $(A,B)$ diffuse since we only need to consider two increasing sequences of words instead of all of them.

\begin{proposition} \label{pythag rep weakly mixing corollary}
	If $A^n, B^n \xrightarrow{s} 0$, then the associated representation $\sigma$ does not contain any non-zero finite-dimensional subrepresentation.
\end{proposition}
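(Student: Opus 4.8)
The plan is to verify weak mixing directly via the characterisation of Proposition \ref{prop:unitary-rep}: it suffices to show that for every finite subset $K \subset \scrH$ and every $\epsilon > 0$ there exists $g \in F$ with $|\langle \sigma(g)\xi, \xi\rangle| < \epsilon$ for all $\xi \in K$. By density of $\scrK$ in $\scrH$ I may assume $K = \{x_1, \dots, x_k\} \subset \scrK$ and, after passing to a sufficiently large regular tree $t_m$, that each $x_j$ is represented as $[t_m, \xi^{(j)}]$ with $\xi^{(j)} \in \fH^{\Leaf(t_m)}$. The whole argument is a stripped-down version of the proof of Theorem \ref{non-induced rep theorem}, and the key observation is that the hard analytic estimate there, namely the fact that $\lim_{i\to\infty} \langle \sigma([L_i, R_i])\xi, \xi\rangle = 0$ for all $\xi \in \fH$, never used the full diffuse hypothesis. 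That estimate relied only on $A^n, B^n \xrightarrow{s} 0$ (to discard the left-most term of $\Phi(R_i)\xi$ and the right-most term of $\Phi(L_i)\xi$) together with the weak convergence to zero of the powers of the shift operator. Hence it remains valid verbatim under the present weaker hypothesis.

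Next I would construct the test elements. Unlike in Theorem \ref{non-induced rep theorem}, where one leaf $\nu_m$ had to be left untouched in order to keep a prescribed point $u$ outside the support, here no point needs to be fixed. I would therefore attach a copy of $L_i$ (resp.~$R_i$) to \emph{every} leaf of $t_m$, obtaining an element $h_{m,i} := [p_{m,i}, q_{m,i}] \in F$ that stabilises every leaf-sdi $I_\nu$ of $t_m$ and whose local restriction at each leaf is exactly $[L_i, R_i]$. Applying the decomposition of Equation \ref{action decomposition} together with the orthogonality relation of Equation \ref{tau composition}, the matrix coefficient collapses to a finite sum over the leaves:
$$\langle \sigma(h_{m,i})x_j, x_j\rangle = \sum_{\nu \in \Leaf(t_m)} \langle \sigma([L_i, R_i])\xi^{(j)}_\nu, \xi^{(j)}_\nu\rangle.$$

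Finally, with $m$ (and hence the number of leaves) now fixed, the right-hand side is a finite sum of terms of the form controlled in the first paragraph, ranging over the $2^m$ leaves and the $k$ vectors. Each such term tends to $0$ as $i \to \infty$, so I can choose $i$ large enough that $|\langle \sigma(h_{m,i})x_j, x_j\rangle| < \epsilon$ holds simultaneously for all $1 \leq j \leq k$, which is exactly what Proposition \ref{prop:unitary-rep} demands. I do not expect any serious obstacle: the only conceptual content is checking that the vine estimate survives under $A^n, B^n \xrightarrow{s} 0$, and the simplification compared to Theorem \ref{non-induced rep theorem} is that, since no leaf is fixed, the error term $\norm{\xi_{\nu_m}}^2$ appearing in that proof disappears entirely, so the argument reduces to a single finite-sum limit rather than an $\epsilon$-budget split between two competing contributions.
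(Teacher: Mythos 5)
Your proposal is correct and takes essentially the same route as the paper's own proof: both rest on the observation that Claim 6 of Theorem \ref{non-induced rep theorem} uses only $A^n, B^n \xrightarrow{s} 0$, both attach copies of $L_i$ and $R_i$ to \emph{every} leaf of $t_m$ so that the error term $\norm{\xi_{\nu_m}}^2$ disappears, and both conclude via the finite-sum collapse of the matrix coefficient and Proposition \ref{prop:unitary-rep}.
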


\begin{proof}
	Continue to use the same notation defined in the proof of Theorem \ref{non-induced rep theorem}. In the proof of Theorem \ref{non-induced rep theorem} we aimed to show that the restriction of $\sigma_\fU$ to a subgroup of $F$ was weakly mixing using Proposition \ref{prop:unitary-rep}. 
	Here, we want to show that the whole representation $\sigma$ of the whole group $F$ is weakly mixing. First, observe Claim $6$ in the proof of Theorem \ref{non-induced rep theorem} still continues to hold because by initial hypothesis $A^n,B^n \xrightarrow{s} 0$ which is the only assumption used in proving Claim $6$. Thus, we have 
	$$\lim_{i \to\infty} \langle \sigma([L_i, R_i])\xi, \xi \rangle = 0 \text{ for all } \xi \in \fH.$$ 
	Define the trees $\ti p_{n,i}$ (resp. $\ti q_{n,i}$) by attaching $2^n$ copies of $L_i$ (resp. $R_i$) to each of the leaves of $t_n$ and define $\ti g_{n,i} := [\ti p_{n.i}, \ti q_{n,i}] \in F$. Note $\ti p_{n,i}, \ti q_{n,i}$ are different to $p_{n,i}, q_{n,i}$ defined in the proof of Theorem \ref{non-induced rep theorem} since here we have attached $L_i,R_i$ to \textit{all} leaves of $t_n$ (since we no longer require to fix the sequence $u$). Then by performing a similar calculation as in the proof of Claim $7$ (but now using $\ti g_{n,i}$ and not separating out the terms $\xi_{\nu_m}$) we obtain that $\sigma$ is weakly mixing which proves the result.
\end{proof}

In a future article we will see that the assumptions of Theorem \ref{non-induced rep theorem} and Proposition \ref{pythag rep weakly mixing corollary} are optimal \cite{Brothier-Wijesena-2}.

\section{A class of examples}\label{sec:example}

{\bf Definition.}
We illustrate our main result with a class of representations parametrised by the real 3-sphere $S^3$.
These examples first appear in \cite[Section 6]{BJ19}.
Consider the one-dimensional Hilbert space $\fH=\C$. 
A Pythagorean pair acting on $\C$ is a pair of complex numbers $(A,B)$ satisfying $|A|^2+|B|^2=1$.
	We deduce that the set of all Pythagorean pairs acting on $\C$ is in bijection with the real 3-sphere $S^3$.
Take $(A,B)\in S^3$ and consider the associated Pythagorean representation $\sigma=\sigma_{A,B}:F\act\scrH$.

{\bf Diffuse representations.}
Observe that $(A,B)$ is not diffuse if and only if $A$ or $B$ is equal to $0$.
Hence, the non-diffuse $(A,B)$ are the one contained in the two circles 
$$C_1:=\{(a,0)\in\C^2:\ |a|=1\} \text{ and } C_2:=\{(0,b)\in\C^2:\ |b|=1\}.$$
We deduce the following corollary of Theorem \ref{non-induced rep theorem}.

\begin{corollary}\label{cor:dim-one}
	For all $(A,B)\in S^3\setminus(C_1\cup C_2)$ we have that the associated representation $\sigma_{A,B}:F\act\scrH$ is \NInd.
\end{corollary}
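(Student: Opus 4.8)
The plan is to deduce the corollary directly from Theorem \ref{non-induced rep theorem} by verifying that every pair $(A,B)\in S^3\setminus(C_1\cup C_2)$ is diffuse. Since that theorem already establishes that diffuse Pythagorean representations are \NInd, the entire content of the corollary reduces to this single elementary verification.

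First I would record what the hypothesis $(A,B)\in S^3\setminus(C_1\cup C_2)$ means concretely: it says precisely that $A\neq 0$ and $B\neq 0$. Combined with the Pythagorean relation $|A|^2+|B|^2=1$, this forces $0<|A|<1$ and $0<|B|<1$, each modulus being strictly below $1$ because the other is strictly positive. Next I would exploit that $\fH=\C$ is one-dimensional, so $A$ and $B$ act as scalar multiplications. Any increasing word $p_n=x_n\cdots x_1$ with $x_k\in\{A,B\}$ is therefore multiplication by a product of $n$ scalars drawn from $\{A,B\}$; writing $a_n$ and $b_n=n-a_n$ for the numbers of occurrences of $A$ and $B$, we get $|p_n|=|A|^{a_n}|B|^{b_n}$. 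Setting $r:=\max(|A|,|B|)<1$ yields the uniform bound $|p_n|\leq r^n$, so $|p_n|\to 0$ and hence $p_n\xi\to 0$ for every $\xi\in\C$ and every increasing word. This is exactly the diffuseness condition of Definition \ref{diffuse definition}.

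With diffuseness in hand, Theorem \ref{non-induced rep theorem} immediately gives that $\sigma_{A,B}:F\act\scrH$ is \NInd, completing the argument. I do not expect any genuine obstacle here: the only point requiring mild care is recognising that in the one-dimensional setting diffuseness collapses to a geometric-decay estimate, and that the Pythagorean constraint upgrades ``both scalars nonzero'' to ``both of modulus strictly below one'', which is what makes $r<1$ and the bound $r^n\to 0$ work. For completeness I might also note the converse direction — that on $C_1$ (resp.\ $C_2$) the sequence $A^n$ (resp.\ $B^n$) has constant modulus one and hence fails to decay, so the excised circles are exactly the non-diffuse locus — although this direction is not needed for the corollary itself.
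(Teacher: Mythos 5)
Your proposal is correct and follows essentially the same route as the paper: the paper likewise observes that a scalar pair $(A,B)$ is diffuse exactly when both $A$ and $B$ are nonzero (i.e.\ off $C_1\cup C_2$) and then applies Theorem \ref{non-induced rep theorem}. Your geometric-decay estimate $|p_n|\leq \max(|A|,|B|)^n\to 0$ merely fills in the elementary verification that the paper leaves as an observation.
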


We will show in a future article that all these diffuse representations of $F$ are irreducible and pairwise non-isomorphic.

{\bf Non-diffuse representations.}
The remaining representations coming from the two circles $C_1$ and $C_2$ are actually reducible. 
Indeed, it is not hard to prove that if $B=0$, then $\sigma_{A,B}$ is isomorphic to the direct sum of
\begin{itemize}
	\item the one-dimensional representation
	$\chi:F\to S^1, g\mapsto A^{\log_2((g^{-1})'(0))}$; and
	\item the monomial representation $\Ind_{F_{1/2}}^F\theta$ where 
	$F_{1/2}$ is the parabolic subgroup of $F$ fixing the point $1/2$, and 
	$\theta:F_{1/2}\to S^1,\ g\mapsto A^{\log_2((g^{-1})_+'(1/2))}.$
\end{itemize}
In the definition of $\theta$ we take the right-derivative of $g^{-1}$ at the point $1/2$.
If we rather consider the action of $g$ on the Cantor space then $\theta$ corresponds in taking a slope at the infinite binary sequence $10000\dots$.
There is a similar decomposition when $A=0$ by switching $0$ and $1$ and right-derivatives with left-derivatives.

{\bf Note:} In Section 6 of \cite{BJ19} we implicitly considered the cyclic subrepresentation of $\sigma_{A,B}$ generated by any non-zero vector $\xi\in\fH$ seating inside $\scrH$. When $A\neq 0\neq B$ it does not matter because the representation $\sigma_{A,B}$ is irreducible. Although, when $B=0$ for instance, the description of the representation given in \cite{BJ19} corresponds to the one-dimensional representation of above. In particular, if $A=1,B=0$, then the cyclic subrepresentation of $\sigma_{A,B}$ generated by $\xi$ is isomorphic to the trivial representation $1_F$.

{\bf The representations of Koopman and Garncarek.} 
It is interesting to notice that the representations considered by Garncarek \cite{garncarek2012analogs} are contained in the above Pythagorean representations.
Indeed, first consider $A=B=1/\sqrt 2.$
One can prove that $\sigma$ is unitary conjugated to the Koopman representation $\kappa:F\act L^2([0,1])$ associated to the usual action $F\act [0,1]$.
Since $F\act [0,1]$ is not measure-preserving we must include the Radon-Nikodym derivative in the formula of $\kappa$ in order to have a unitary representation.
The formula is then
$$\kappa(g)f= \left( \frac{ dg_*L}{dL} \right)^{1/2} \cdot f\circ g^{-1} \text{ for } g\in F, f\in L^2([0,1])$$
where $L$ is the Lebesgues measure.
Garncarek considered the following representations obtained by twisting the Radon-Nikodym by a parameter $s\in\R$:
$$\kappa_s(g)f:=\left( \frac{ dg_*L}{dL} \right)^{1/2+is} \cdot f\circ g^{-1} = \left( \frac{ dg_*L}{dL} \right)^{is}\cdot\kappa(g)f$$ 
for $g\in F, f\in L^2([0,1]).$
By comparing this formula with the one given in Section 6.2 of \cite{BJ19} we deduce that the representations of Garncarek are exactly the Pythagorean representations obtained from the circle 
	$$C_3:=\{ (\frac{\omega}{\sqrt 2}, \frac{\omega}{\sqrt 2}):\ \omega\in S^1\}.$$
	Note that $C_3$ is the set of $(A,B)\in S^3$ satisfying $A=B$.

\begin{remark}
	A Pythagorean representation $\sigma_{A,B}$ morally consists in applying sums of $W^* U$ to vectors of $\fH$ where $W,U$ are words in $A,B$.
	When $A=B$, then there exists a unitary $u$ satisfying that $A=B=u/\sqrt 2.$
	In that case the product $W^*U$ only depends on the difference of lengths between the words $W$ and $U$.
	This permits to describe $\sigma_{A,B}$ using the Koopman representation and a cocycle valued in the unitary group of $\fH$, see Section 6.5 of \cite{BJ19}.
	When $\dim(\fH)=1$, then this cocycle is a scalar and we recover the description of Garncarek.
	
	When $A\neq B$ but still commute, then $W^*U$ depends on the number of $A$ and $B$ appearing in the words $U,W$.	The representation $\sigma_{A,B}$ then remembers more the tree-structure of elements of the Thompson group.
	It is then more complex and cannot be defined in an obvious way via the action $F\act [0,1]$ but rather via the action on the Cantor space $\{0,1\}^{\N^*}.$ 
	Finally, when $AB\neq BA$, then $W^*U$ remembers even more the word structure. 
	This produces rich and complex families of representations for which our tools are well-adapted.
	We will investigate explicit representations arising in these contexts in future works.
\end{remark}

%%%%%%%%%%%%%%%%%%%%%%%%%%%%%%%%%%%%%%%%%%%%%%%
%%%%%%%%%%%%%%%BIBLIOGRAPHY%%%%%%%%%%%%%%%%%%%%%%

\newcommand{\etalchar}[1]{$^{#1}$}

\end{document}